\documentclass[11pt]{amsart}
\usepackage[numbers,sort&compress]{natbib}
\usepackage{float}
\usepackage{tikz}
\usepackage{amsmath, amsthm, color, fullpage}
\usepackage{graphicx, amsaddr}
\usepackage{enumerate}
\usepackage{epsfig}
\usepackage{epstopdf}
\usepackage{tikz}
\usepackage{array,amsmath, enumerate,  url, psfrag}
\usepackage{amssymb, amsaddr, fullpage}

\usepackage{tikz}
\usetikzlibrary{shapes.geometric, positioning}

\usepackage{graphicx,subfigure}
\usepackage{color}
\newtheorem{theorem}{Theorem}[section]

\newtheorem{lemma}[theorem]{Lemma}

\newtheorem{definition}[theorem]{Definition}

\title{Flexible DP-$4$-coloring of planar graphs without $4$-cycles and intersecting triangles}
\author{Shu Fang$^{1}$, Runrun Liu$^{1}$, Gexin Yu$^{2,*}$}

\address{
$^{1}$\small School of Mathematics, Zhejiang Normal University, Jinhua, Zhejiang 321004, China.\\
$^{2}$\small Department of Mathematics, William \& Mary, Williamsburg, VA 23185, USA.
}

\thanks{*Corresponding author. Email: gyu@wm.edu}

\thanks{The research of the second author was supported by ZJNSFC (No.\,LJHSZ26A010001) and NSFC (No.\,12101563, 12371360).}

\begin{document}
\maketitle

\begin{abstract}
\baselineskip=14pt
Graph coloring with preferences offers a powerful framework for constraint satisfaction problems in which fulfilling every request is impossible but satisfying a guaranteed positive fraction is highly desirable. A \emph{request} on a graph $G$ equipped with a list assignment $L$ assigns to each vertex of some subset $dom(r)\subseteq V(G)$ a preferred color from its list. Following Dvo\v{r}\'{a}k, Norin, and Postle (2019), $G$ is \emph{$\varepsilon$-flexibly $k$-choosable} if, for every $k$-list assignment $L$ and every request $r$, there is an $L$-coloring of $G$ that agrees with $r$ on at least $\varepsilon|dom(r)|$ vertices. The corresponding notion for DP-coloring (correspondence coloring) was formalized by Bradshaw, Choi, and Kostochka (2025). Choi, Clemen, Ferrara, Horn, Ma, and Masa\v{r}\'{i}k (2022) proved that every planar graph without $4$-cycles and with $3$-cycle distance at least $2$ is $\varepsilon$-flexibly $4$-choosable. We improve the result in two respects: weakening the hypothesis from $3$-cycle distance $\geq 2$ to vertex-disjoint triangles, and strengthening the conclusion from list flexibility to weighted DP-flexibility:
\emph{Every simple planar graph without $4$-cycles and without intersecting triangles is weighted $\varepsilon$-flexibly DP-$4$-colorable.}
The list size $4$ is sharp: Montassier, Raspaud, and Wang constructed a planar graph without $4$-cycles, $5$-cycles, and intersecting triangles that is not $3$-choosable.
\end{abstract}
\baselineskip=16pt

\section{\bf Introduction}

The flexibility framework of Dvo\v{r}\'{a}k, Norin, and Postle~\cite{DNP19} interpolates between list coloring and precoloring extension. Rather than asking whether a partial precoloring extends to a full $L$-coloring, which is typically a hard question with a binary answer, one asks for an $L$-coloring that honors a constant fraction of a set of vertex-color preferences. This is the natural relaxation when preferences come from soft constraints, and it has driven a productive line of research in recent years.

\subsection*{List flexibility}

A \emph{list assignment} for a graph $G$ is a function $L$ that assigns to each $v\in V(G)$ a set $L(v)$ of colors; an \emph{$L$-coloring} is a proper coloring $\phi$ with $\phi(v)\in L(v)$ for all $v$. The graph $G$ is \emph{$k$-choosable} if it is $L$-colorable for every $L$ with $|L(v)|\geq k$. A \emph{request} for $(G,L)$ is a function $r$ with domain $dom(r)\subseteq V(G)$ and $r(v)\in L(v)$. For $\varepsilon>0$, $r$ is \emph{$\varepsilon$-satisfiable} if there is an $L$-coloring $\phi$ of $G$ with $\phi(v)=r(v)$ for at least $\varepsilon|dom(r)|$ vertices $v\in dom(r)$. We say $(G,L)$ is \emph{$\varepsilon$-flexible} if every request is $\varepsilon$-satisfiable, and $G$ is \emph{$\varepsilon$-flexibly $k$-choosable} if $(G,L)$ is $\varepsilon$-flexible for every $k$-list assignment $L$.

A \emph{weighted request} is a function $w:\{(v,c):v\in V(G),\,c\in L(v)\}\to\mathbb{R}_{\geq 0}$. Writing $w(G,L)=\sum_{v,c}w(v,c)$, we call $w$ to be \emph{$\varepsilon$-satisfiable} if some $L$-coloring $\phi$ satisfies
$$\sum_{v\in V(G)}w(v,\phi(v))\;\geq\;\varepsilon\,w(G,L).$$
Then $G$ is \emph{weighted $\varepsilon$-flexibly $k$-choosable} if every weighted request from every $k$-list assignment is $\varepsilon$-satisfiable.

\subsection*{Prior results for sparse and planar graphs}

The study of flexibility on sparse graphs has attracted considerable attention. Dvo\v{r}\'{a}k--Norin--Postle~\cite{DNP19} established that $d$-degenerate graphs are weighted $\varepsilon$-flexibly $(d+2)$-choosable,  conjectured that such graphs are $\varepsilon$-flexibly $(d+1)$-choosable, and verified the conjecture for graphs with maximum average degree less than $d+\frac{2}{d+3}$ . They also proved that planar graphs (which are $5$-degenerate) are $\varepsilon$-flexibly $6$-choosable. Kaul, Mathew, Mudrock, and Pelsmajer~\cite{KMMP24} improved the existential $\varepsilon$ bound of~\cite{DNP19} by giving explicit estimates for the fraction of satisfiable requests.  Bradshaw, Masa\v{r}\'{i}k, Stacho~\cite{BMS22} characterized the graphs of maximum degree $\Delta$ that that $\varepsilon$-flexibly $\Delta$-choosable, which answered a question of~\cite{DNP19}. %Cambie, Cames van Batenburg, and Zhu~\cite{CCZ23+} established a connection between list packing and the flexibility framework. 
Subsequent research focused on reducing the list size:
\begin{itemize}
\item Dvo\v{r}\'{a}k, Masa\v{r}\'{i}k, Mus\'{i}lek, and Pangr\'{a}c~\cite{DMMP20,DMMP21}: $\varepsilon$-flexibly $3$-choosable for planar graphs with girth $\geq 6$; $\varepsilon$-flexibly $4$-choosable for triangle-free planar graphs.
\item Bi and Bradshaw~\cite{BB23+}: $\varepsilon$-flexibly $3$-choosable for graphs with $mad(G)<3$, and $4$-choosable for graphs with $mad(G)<\frac{11}{3}$.
\item Masa\v{r}\'{i}k~\cite{M19}: $\varepsilon$-flexibly $5$-choosable for planar without $C_4$, a $4$-cycle.
\item Choi, Clemen, Ferrara, Horn, Ma, and Masa\v{r}\'{i}k~\cite{CCFHMM22}: $\varepsilon$-flexibly $5$-choosable for planar without $K_4^-$; \emph{$\varepsilon$-flexibly $4$-choosable for planar without $C_4$ with $3$-cycle distance $\geq 2$}; $\varepsilon$-flexibly $4$-choosable for planar without $\{C_4,C_5,C_6\}$.
\item Yang and Yang~\cite{YY26}: $\varepsilon$-flexibly $4$-choosable for planar without $\{C_4,C_5\}$.
\item Yang~\cite{Y22}: more sufficient conditions for $\varepsilon$-flexibly $5$-choosability.
\end{itemize}

\subsection*{DP-flexibility}

DP-coloring (correspondence coloring), introduced by Dvo\v{r}\'{a}k and Postle~\cite{DP18}, generalizes list coloring. Given $G$ and $h:V(G)\to\mathbb{N}$, an \emph{$h$-cover} $(H,L)$ of $G$ is a graph $H$ with
\begin{itemize}
\item a clique $L(v)\subseteq V(H)$ of size $h(v)$ for each $v\in V(G)$,
\item edges between $L(u)$ and $L(v)$ forming a matching whenever $uv\in E(G)$,
\item no edges between $L(u)$ and $L(v)$ whenever $uv\notin E(G)$.
\end{itemize}
An \emph{$(H,L)$-coloring} of $G$ is a map $\phi:V(G)\to V(H)$ with $\phi(v)\in L(v)$ for all $v$ and $\phi(x)\phi(y)\notin E(H)$ for all $xy\in E(G)$. The graph $G$ is \emph{DP-$k$-colorable} if every $k$-cover admits an $(H,L)$-coloring.

A \emph{weighted DP-request} is a function $w:V(H)\to\mathbb{R}_{\geq 0}$. We say $(G,H,L)$ is \emph{weighted $\varepsilon$-flexible} if for every such $w$ there is an $(H,L)$-coloring $\phi$ with
$$\sum_{v\in V(G)}w(\phi(v))\;\geq\;\varepsilon\sum_{v\in V(G)}\sum_{c\in L(v)}w(c),$$
and $G$ is \emph{weighted $\varepsilon$-flexibly DP-$k$-colorable} if this holds for every $k$-cover.

Bradshaw, Choi, and Kostochka~\cite{BCK25+} showed that sparse multigraphs with maximum average degree $<3$ are $\varepsilon$-flexibly DP-$3$-colorable, extending the list flexibility results by Bi and Bradshaw~\cite{BB23+}. Cambie, Cames van Batenburg and Zhu~\cite{CCZ23+} considered flexible choosability and flexible DP-colorings under packing list-colorings and packing DP-colorings, and obtained some very interesting results. 

\subsection*{Main result}

Lam, Xu, and Liu~\cite{LXL99} and Wang and Lih~\cite{WL02} proved $4$-choosability for the planar subclasses excluding $C_4$ and excluding intersecting triangles, respectively. The following natural flexibility question remains open: does list size $4$ suffice for $\varepsilon$-flexibility under \emph{either} hypothesis alone? Under the conjunction, \cite{CCFHMM22} obtained $\varepsilon$-flexibility at list size $4$, but only under the stronger requirement that triangles lie at distance $\geq 2$ (which forbids triangles even when joined by an edge). We weaken this to vertex-disjointness and simultaneously upgrade to weighted DP-flexibility.

\begin{theorem}\label{main}
There exists $\varepsilon>0$ such that every simple planar graph without $4$-cycles and without intersecting triangles is weighted $\varepsilon$-flexibly DP-$4$-colorable.
\end{theorem}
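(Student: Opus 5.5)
We will follow the now-standard reducibility framework of Dvo\v{r}\'{a}k, Norin, and Postle~\cite{DNP19}, in its DP form as used by Bradshaw, Choi, and Kostochka~\cite{BCK25+}. The key lemma we aim for is: there is a finite family $\mathcal{F}$ of configurations, each of size at most some absolute constant $b$, such that (i) every graph $G$ in our class contains a configuration from $\mathcal{F}$ (unavoidability), and (ii) every $F\in\mathcal{F}$ is \emph{weakly DP-$4$-reducible}. Reducibility here means two things. First, for every $4$-cover $(H,L)$ of $G$, every $(H,L)$-coloring of $G-V(F)$ extends to $F$. Second, for each $v\in V(F)$ and each $c\in L(v)$, some extension colors $v$ by $c$. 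Because the class is closed under vertex deletion, the general lemma of~\cite{DNP19} then yields weighted $\varepsilon$-flexibility. Its DP version is the probabilistic argument: choose a random set of reducible configurations that are pairwise far apart, and color everything else first. It gives $\varepsilon$ depending only on $b$ and the list size $4$. We would state this DP transfer lemma explicitly and prove it by copying the argument of~\cite{DNP19}. The only change is that ``color $c$ at $v$'' becomes ``vertex $c\in L(v)$ of $H$''. Nothing in that argument uses more than local matchings between lists.

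The reducibility of each configuration will be checked through a DP degeneracy-type criterion. Fix the color $c$ at a target vertex $v\in V(F)$ and delete from the lists of the neighbours of $v$ the vertices matched to $c$. Then verify that $F-v$, with the residual list sizes (list size minus the number of colored neighbours outside $F$), is DP-colorable. For example, it suffices that $F-v$ can be ordered so that each vertex has more available colors than earlier neighbours. We must also check the weak requirement that $G-V(F)$ itself stays in the class, which is automatic. Typical configurations will include the following:
\begin{itemize}
\item a vertex of degree at most $3$;
\item a $4$-vertex adjacent to a $3$-vertex (ruled out by the first item in minimal terms, but needed here with the extra ``$c$ at $v$'' condition);
\item edges of $4$-vertices lying on triangles;
\item chains of $4$-vertices along faces.
\end{itemize}
These are modelled on the configurations of~\cite{CCFHMM22}. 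Note that degree-$\le 3$ vertices are trivially reducible only in the extension sense, not in the ``prescribed color'' sense when the degree is $3$ and $v$ is a neighbour. So we will need small clusters rather than single vertices.

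Unavoidability will be proved by discharging. Assign charge $d(v)-4$ to each vertex and $d(f)-4$ to each face, so that the total charge is $-8$ by Euler's formula. Since there are no $4$-cycles, the only negative faces are $3$-faces. Triangles are vertex-disjoint, and no $3$-face shares an edge with a $5$-face: if it did, their union would contain a $4$-cycle, as an edge chord would. Hence each $3$-face is surrounded by $6^+$-faces or by faces meeting it only at vertices. We would let each $5^+$-face send charge to incident triangles through shared edges, and let $5^+$-vertices send charge to incident $3$-faces. If $G$ avoids $\mathcal{F}$, the vertices have degree at least $4$ and the $4$-vertices near triangles are constrained. The aim is to show every element ends with nonnegative charge, contradicting the total of $-8$.

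The main obstacle is matching the two halves. Weakening from triangle distance $\ge 2$ to mere vertex-disjointness allows two triangles joined by an edge, or both incident to a common $5$-face via a short path. There, a $4$-vertex on one triangle may see little charge. The configuration set must then contain such a ``bridge'' structure of $4$-vertices between two triangles, and we must prove it DP-reducible. In DP-coloring, the cycle structure of a triangle can be twisted, so the list-coloring trick of choosing equal colors fails. We would first try ordering the bridge so that the two triangle vertices furthest from the outside are colored last with residual list size $3$ against $2$ earlier neighbours. If that fails, we would enlarge the configuration to include an adjacent $5$-face. We expect this case analysis to be the heart of the paper.
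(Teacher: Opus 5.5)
\textbf{The transfer lemma is misstated, and this is a genuine gap.} What you call weak reducibility is, at best, only the (FIX) half of the criterion in \cite{DNP19}: every colouring of $G-V(F)$ extends, and with $c$ fixed at $v$ the rest of $F$ is colourable from the residual lists. The probabilistic argument there, and its DP version here (Definition~\ref{d1}, Lemma~\ref{DP2}), also needs (FORB). This says that for every set $I$ of at most $k-2=2$ free vertices, the configuration stays colourable after each vertex of $I$ loses one more colour. (FORB) is what carries through the induction a lower bound $p^{|I|}$ on the probability that the already coloured outside neighbours of $v$ all avoid the colour matched to $c$. Without it, nothing ensures that $c$ is still available at $v$ when you invoke (FIX). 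Colouring ``everything else first'' around random far-apart configurations gives no control over the colours on $N(v)\setminus V(F)$ either.

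A sanity check shows your criterion cannot be the right one. A single vertex of degree $3$ passes both of your checks, since $F-v$ is empty. In this class such a vertex always exists: if $\delta\ge 4$, there are no $4$-faces and at most $n/3$ triangular faces, and Euler's formula then gives $|E|<2n$, a contradiction. So your framework would prove the theorem in one line. More generally it would make every $d$-degenerate graph flexibly DP-$(d+1)$-colourable, which is only conjectured in \cite{DNP19}. The real obstruction for a lone $3$-vertex is (FORB) with $|I|=1$: its residual list has size $1$ and cannot lose another colour.

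\textbf{Consequences for unavoidability.} The second half of your plan aims at the wrong target. You cannot discharge assuming minimum degree $4$; this also contradicts your own remark that $3$-vertices need clusters. Every configuration must be a cluster whose free part leaves each vertex at least $2$ residual colours, so each $3$-vertex needs a free neighbour. Moreover, (FORB) for pairs is achievable with $k=4$ only through the $\mathcal{F}$-forbidding refinement of \cite{CCFHMM22}. That refinement exempts every pair at distance at most $2$, because a new common neighbour would create a $4$-cycle or intersecting triangles. For instance, (RC2) is reducible only because $I=\{u,v\}$ is exempt; cutting both lists on the edge $uv$ to size $1$ can make it uncolourable in a DP cover. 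The discharging therefore has to keep $3$-vertices. The paper does this with $\mu(v)=d(v)-2$ and $\mu(f)=-2$, each $3$-vertex sending $\frac13$ to its faces. The real work is in $5$-faces containing $3$-vertices and in $4$-vertices with pendent triangles: (RC3)--(RC9), poor $5$-faces, and special $5^+$-vertices.

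\textbf{A false structural claim.} A $3$-face can share an edge with a $5$-face without creating a $4$-cycle. Take a $5$-cycle $xyabc$ and add a vertex $z$ adjacent to $x$ and $y$; the only cycles have lengths $3$, $5$ and $6$. Exactly such adjacencies are what (RC3), (RC9) and the pendent $5$-faces handle.

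Finally, the triangle ``bridges'' and DP twisting you expect to be the heart of the proof do not arise. The free parts in the paper are trees, or become trees after removing one vertex that keeps all $4$ colours, and they are coloured greedily in BFS order.
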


Two triangles are \emph{intersecting} if they share a vertex; equivalently, the hypothesis says that the distance between $3$-cycles is at least $1$.

The list size $4$ cannot be reduced: Montassier, Raspaud, and Wang~\cite{MRW06} constructed a planar graph without $4$-cycles, $5$-cycles, and intersecting triangles that is not $3$-choosable. So Theorem~\ref{main} is best possible with respect to the list size.

\section{\bf Preliminaries}

Here we fix the terminology used throughout the paper. For a vertex $v$, we call $v$ a {\em $k$-vertex}, a {\em $k^+$-vertex}, or a {\em $k^-$-vertex} if $d(v)=k$, $d(v)\ge k$, or $d(v)\le k$, respectively. Similarly, we define a {\em $k$-face}, a {\em $k^+$-face}, and a {\em $k^-$-face}, and likewise $k$-, $k^+$-, and $k^-$-paths and cycles.
A $k$-face $f=[v_1v_2\cdots v_k]$ is an \emph{$(x_1,x_2,\ldots,x_k)$-face} if $d(v_i)=x_i$ for each $i\in[k]$. If a $k$-vertex $v$ is adjacent to a vertex $u$, then we call $v$ a \emph{$k$-neighbor} of $u$; similarly, we define a \emph{$k^+$-neighbor} and a \emph{$k^-$-neighbor}. A face $f$ is a \emph{pendent $3$-face} of a vertex $v$ if $v$ is not on $f$ but is adjacent to some $3$-vertex $u$ on $f$; in this case we call $u$ a \emph{pendent $3$-neighbor} of $v$. Let $dist(u,v)$ denote the distance between $u$ and $v$ in $G$.

Given a function $f: V(S)\to \mathbb{Z}$ and a vertex $v\in V(S)$, let $f\downarrow v$ denote the function with $(f\downarrow v)(w)=f(w)$ for $w\ne v$ and $(f\downarrow v)(v)=1$. A list assignment $L$ is an \emph{$f$-assignment} if $|L(v)|\ge f(v)$ for all $v\in V(S)$. Given a set of graphs $\mathcal{F}$ and a graph $H$, a set $I\subset V(S)$ is \emph{$\mathcal{F}$-forbidding} if the graph $H$ together with one additional vertex adjacent to all vertices in $I$ contains no graph from $\mathcal{F}$. Let $S$ be an induced subgraph of $G$ and $v\in V(S)$. We define $d_{S}(v)$ as the degree of $v$ in the induced subgraph $S$. The following two definitions and two lemmas are from~\cite{CCFHMM22} in terms of list coloring; we restate them in the setting of DP-coloring.

\begin{definition}(\cite{CCFHMM22})\label{d1}
Let $G$ be a graph with a $k$-cover $(H,L)$. A graph $S$ is an \emph{$(\mathcal{F},k)$-boundary-reducible induced
subgraph} of $G$ if there exists a set $B\subsetneq V(S)$ such that
\begin{itemize}
\item[(FIX)] for every $v\in V(S)\setminus B$, $S-B$ is $(H,L)$-colorable for every $((k-d_G+d_{(S-B)})\downarrow v)$-assignment $L$, and
\item[(FORB)] for every $\mathcal{F}$-forbidding set $I\subseteq V(S)\setminus B$ of size at most $k-2$, $S-B$ is $(H,L)$-colorable for every $(k-d_G+d_{(S-B)}-\mathbf{1}_I)$-assignment $L$.
\end{itemize}
\end{definition}

\begin{definition}(\cite{CCFHMM22})\label{d2}
Let $G$ be a graph with a $k$-cover $(H,L)$ that contains no graph in $\mathcal{F}$ as an induced subgraph. An \emph{$(\mathcal{F},k,b)$-resolution} of $G$ is a sequence of nested subgraphs $G_i$ for $0\le i\le M$ such that $G_0:=G$ and
$$G_i:=G-\bigcup_{j=1}^i(H_j-B_j),$$
where each $H_i$ is an induced $(\mathcal{F},k)$-boundary-reducible induced subgraph of $G_{i-1}$ with boundary $B_i$ satisfying $|V(H_i)\setminus B_i|\le b$, and $G_M$ is an $(\mathcal{F},k)$-boundary-reducible induced subgraph with empty boundary and size at most $b$. For technical reasons, set $G_{M+1}:=\emptyset$.
 \end{definition}

Our goal is to show that every graph containing no subgraph from $\mathcal{F}$ contains a boundary-reducible induced subgraph. Conceptually, we then think of a resolution as an inductively defined object obtained by iteratively identifying a reducible subgraph with boundary $B$ and deleting $S-B$ until $V(G)$ is exhausted.

The following two lemmas are essentially as in~\cite{DNP19} and~\cite{CCFHMM22}; we include the proofs here for DP-coloring.

\begin{lemma}(\cite{DNP19} for list flexibility)\label{DP1}
Let $G$ be a graph with a $k$-cover $(H,L)$. Suppose $G$ is DP-$k$-colorable and there exists a probability distribution on $(H,L)$-colorings $\phi$ of $G$ such that $\operatorname{Prob}[\phi(v)=c]\ge\varepsilon$ for every $v\in V(G)$ and $c\in L(v)$. Then $G$ is $\varepsilon$-flexibly DP-$k$-colorable.
\end{lemma}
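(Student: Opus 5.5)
This is a linearity-of-expectation argument. We fix an arbitrary weighted DP-request $w:V(H)\to\mathbb{R}_{\ge 0}$ and take the probability distribution on $(H,L)$-colorings $\phi$ of $G$ given by the hypothesis. We then look at the random variable $X=\sum_{v\in V(G)} w(\phi(v))$ and show $\mathbb{E}[X]\ge \varepsilon\sum_{v}\sum_{c\in L(v)} w(c)$. Since a random variable attains a value at least its expectation with positive probability, and the distribution is supported on genuine $(H,L)$-colorings, some coloring $\phi$ satisfies the weighted inequality. This is exactly weighted $\varepsilon$-flexibility of $(G,H,L)$. Since the cover was an arbitrary $k$-cover (the hypothesis being understood for each $k$-cover), $G$ is $\varepsilon$-flexibly DP-$k$-colorable, in the weighted sense and therefore also in the unweighted one.

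The key computation is to write $w(\phi(v))=\sum_{c\in L(v)} w(c)\,\mathbf{1}[\phi(v)=c]$. This is valid because $\phi(v)\in L(v)$ always holds, and the lists $L(v)$ are disjoint cliques in $H$, so exactly one indicator is $1$. Linearity then gives
$$\mathbb{E}[X]=\sum_{v\in V(G)}\sum_{c\in L(v)} w(c)\,\operatorname{Prob}[\phi(v)=c]\ge \varepsilon\sum_{v\in V(G)}\sum_{c\in L(v)} w(c),$$
using $w\ge 0$ and $\operatorname{Prob}[\phi(v)=c]\ge\varepsilon$.

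The unweighted notion is the special case where $w$ is the indicator of the requested vertices of $H$: for a request choosing $c_v\in L(v)$ for $v\in dom(r)$, set $w(c_v)=1$ and $w=0$ elsewhere. The argument then yields a coloring satisfying at least $\varepsilon|dom(r)|$ requests.

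There is no real obstacle. The only points needing care are that the distribution must be supported on actual $(H,L)$-colorings, which is where DP-$k$-colorability enters: it guarantees the set of colorings is nonempty, so such a distribution can exist. The second point is that we use nonnegativity of $w$ when passing from the probability bound to the expectation bound.
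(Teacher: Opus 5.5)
Your proposal is correct and is essentially the paper's proof. Both fix a weighted request $w$, draw $\phi$ from the given distribution, and use linearity of expectation to get $\mathrm{E}[\sum_v w(\phi(v))]=\sum_{v,c}\operatorname{Prob}[\phi(v)=c]\,w(c)\ge\varepsilon\sum_{v,c}w(c)$, so some coloring attains at least the expectation. Your extra remarks are sound details the paper leaves implicit: the hypothesis must be read for every $k$-cover, and unweighted requests reduce to $0/1$ weights.
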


\begin{proof}
Let $w$ be a weighted request for $H$, and let $\phi$ be chosen at random from the postulated probability distribution. By linearity of expectation,
\[
\mathrm{E}\left[\sum_{v\in V(G)}w(\phi(v))\right] = \sum_{v\in V(G),\,c\in L(v)}\operatorname{Prob}[\phi(v) = c]\cdot w(c)\geq \varepsilon\sum_{v\in V(G)}\sum_{c\in L(v)}w(c),
\]
and thus there exists an $L$-coloring $\phi$ with $\sum_{v\in V(G)}w(\phi(v))\ge\varepsilon\sum_{v\in V(G)}\sum_{c\in L(v)}w(c)$, as required.
\end{proof}

\begin{lemma}(\cite{CCFHMM22} for list flexibility)\label{DP2}
For all integers $k\ge3$ and $b\ge1$ and for every set $\mathcal{F}$ of forbidden subgraphs there exists an $\varepsilon>0$ as follows. Let $G$ be a graph with an $(\mathcal{F},k,b)$-resolution. Then $G$ is $\varepsilon$-flexibly DP-$k$-colorable.
\end{lemma}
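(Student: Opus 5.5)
By Lemma~\ref{DP1}, it suffices to construct a probability distribution on $(H,L)$-colorings $\phi$ of $G$ such that $\operatorname{Prob}[\phi(v)=c]\ge\varepsilon$ for every $v$ and every $c\in L(v)$. The constant $\varepsilon$ will depend only on $k$ and $b$. I build the random coloring backwards along the resolution. Let $G_0\supset G_1\supset\dots\supset G_M$ be the resolution, with $G_i=G_{i-1}-(H_i-B_i)$. We color $G_M$ first, then $H_M-B_M$, then $H_{M-1}-B_{M-1}$, and so on. When we come to color $H_i-B_i$, every vertex of $G_i$ is already colored.

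The key observation is as follows. Each $v\in V(H_i)\setminus B_i$ loses, from its $k$ colors, at most $d_G(v)-d_{H_i-B_i}(v)$ colors through the matching edges of $H$ to already colored neighbours. These are the neighbours in $G_i$; neighbours in later-deleted pieces are not yet colored, and vertices removed earlier in the resolution order are colored only afterwards. So the surviving lists form a $(k-d_G+d_{H_i-B_i})$-assignment on $H_i-B_i$. Strictly, the degree should be taken in $G_{i-1}$, but those later-colored vertices only impose constraints when they themselves are colored, and (FIX)/(FORB) are applied with degrees in $G_{i-1}$, which is at most $d_G$. I would phrase the order carefully so that each piece is colored with constraints only from $G_i$.

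The random step at stage $i$ is a mixture. With probability $1/2$ we pick a uniformly random vertex $v\in V(H_i)\setminus B_i$ and a uniformly random surviving color $c$, restrict $L(v)$ to $\{c\}$, and use (FIX) to extend to a coloring of $H_i-B_i$. Otherwise we pick any coloring guaranteed by DP-colorability of the piece, which (FIX) also provides. This gives a lower bound of order $1/(2bk)$ on $\operatorname{Prob}[\phi(v)=c]$, but only \emph{conditional} on $c$ surviving in $v$'s list. The main obstacle is to ensure that a fixed color $c\in L(v)$ is not killed by its already colored neighbours with high probability. The color $c$ is blocked exactly when some earlier-colored neighbour $u$ receives the color matched to $c$.

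This is where (FORB) enters, following~\cite{DNP19,CCFHMM22}. I would prove by backward induction the stronger statement that for any vertex $x$ and any set of at most $k-2$ vertex-color pairs $(u_j,c_j)$ with $\{u_j\}$ an $\mathcal{F}$-forbidding set, the probability that all $u_j$ avoid $c_j$ is at least some $\delta>0$. The forbidding condition is needed for the following reason. In $G$, a future vertex $v$ sees at most $k-1$ colored neighbours; the neighbours $u$ with $v\notin B$ form a set with the extra vertex $v$ adjacent, hence $\mathcal{F}$-forbidding. To show that the neighbours avoid the specific matched colors, note that they may lie in different pieces. For those in one piece $H_j-B_j$, we use (FORB): deleting one color from each vertex of $I$ still leaves a colorable instance, so with constant probability we instead color that piece avoiding the forbidden colors. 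We therefore add a third branch to the mixture: with constant probability, choose a random $\mathcal{F}$-forbidding $I\subseteq V(H_j)\setminus B_j$ of size at most $k-2$ and random forbidden colors, and color via (FORB). Since $|V(H_j)\setminus B_j|\le b$, each specific choice has probability bounded below in terms of $b$ and $k$.

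Finally, a vertex $v$ has at most $k-1$ relevant neighbours, which lie in at most $k-1$ pieces, and the pieces' random choices are made independently given the previous colors. Multiplying the constant lower bounds then gives $\operatorname{Prob}[c \text{ survives for } v]\ge\delta(b,k)$. Hence $\operatorname{Prob}[\phi(v)=c]\ge\delta/(2bk)=:\varepsilon$, and Lemma~\ref{DP1} finishes the proof. The step I expect to be most delicate is this bookkeeping of dependence across pieces. I will handle it by conditioning piece by piece along the reverse order and showing that each piece containing a neighbour of $v$ independently has probability at least a constant of avoiding the relevant colors.
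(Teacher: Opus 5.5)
\textbf{Overall.} Your skeleton matches the paper's proof. You color $G_M$ first and then the pieces $Q_i=H_i-B_i$ in reverse order. You note that each $v\in Q_i$ sees only its neighbours in $G_i$. You get a joint avoidance bound for an $\mathcal{F}$-forbidding set of at most $k-2$ vertex--color pairs from (FORB), by multiplying conditional probabilities over the pieces the set meets. Finally you use (FIX) to hit $\phi(v)=c$.

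The only real difference is the random step. The paper colors each piece by a \emph{uniformly random} coloring from the residual lists. Since $Q$ has at most $k^{b}$ colorings, every existence statement (FIX or FORB) immediately becomes a probability bound $p=k^{-b}$. This gives $\varepsilon=p^{k-1}$ with no branch design at all.

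Your three-branch mixture of targeted strategies also works. Each branch and each target (vertex--color pair, or forbidding set with forbidden colors) has probability bounded below in terms of $b,k$ only, independently of the history. As a side effect it gives an $\varepsilon$ polynomial in $b$ for fixed $k$, rather than $k^{-b(k-1)}$; this is irrelevant for Theorem~\ref{main}. Your vertex--color-pair formulation is also the right one for covers, since the color matched to $c$ differs from neighbour to neighbour. The paper's statement of (ii), quantified over a single color $c$, glosses over this.

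\textbf{The gap.} One step fails as written. You bound the number of already-colored neighbours of $v\in Q_i$ by $k-1$. But your avoidance statement, and (FORB) on which it rests, only covers sets of size at most $k-2$. If $k-1$ such neighbours lie in a single piece, (FORB) gives nothing and the product of constants breaks down.

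What you need is $|N(v)\cap V(G_i)|\le k-2$. This follows from (FORB) applied to $I=\{v\}$, which forces $k-d_{G_{i-1}}(v)+d_{Q_i}(v)\ge 2$. This is exactly the paper's remark that (FORB) implies $|I|\le k-2$ in the proof of (i).

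\textbf{A related fix.} State the residual list sizes as $k-d_{G_{i-1}}+d_{Q_i}$ from the outset. Since $H_i$ is reducible as a subgraph of $G_{i-1}$, knowing only that the lists form a $(k-d_G+d_{Q_i})$-assignment would be too weak to invoke (FIX) or (FORB). The correct bound holds precisely because the colored neighbours of $v$ are $N_{G_{i-1}}(v)\setminus V(Q_i)$.
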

\begin{proof}
Let $p = k^{-b}$ and $\varepsilon = p^{k-1}$.
Let $G$ be a graph satisfying the assumptions and let $(H, L)$ be a $k$-cover of $G$.
We prove the following claim by induction on the $(\mathcal{F}, k, b)$-resolution; part~(i) implies that $G$ is $\varepsilon$-flexibly DP-$k$-colorable by Lemma~\ref{DP1}.

\smallskip
\noindent\emph{Claim.} There exists a probability distribution on $(H, L)$-colorings $\phi$ of $G$ such that
\begin{enumerate}
\item[(i)] for every $v \in V(G)$ and every $c \in L(v)$, $\operatorname{Prob}[\phi(v) = c] \geq \varepsilon$, and
\item[(ii)] for every color $c$ and every $\mathcal{F}$-forbidding set $I$ in $G$ of size at most $k-2$, $\operatorname{Prob}[\phi(v) \neq c] \geq p^{|I|}$ for all $v\in I$.
\end{enumerate}

\smallskip
The claim holds trivially for a graph with no vertices, which is the base case.
Hence, suppose $V(G) \neq \emptyset$.
By the assumptions, there exists an induced subgraph $S$ of $G$ that is $(\mathcal{F}, k)$-boundary-reducible.
Let $B \subset V(S)$ be its boundary and let $Q = S - B$.
By definition, $|V(Q)| \leq b$.

By the induction hypothesis, there exists a probability distribution on $(H, L)$-colorings of $G-(S-B)$ satisfying (i) and (ii).
Choose an $(H, L)$-coloring $\psi$ from this distribution.
Define a new cover $(H', L')$ on $G[V(Q)]$ as follows:
for each $y \in V(Q)$, let $L'(y)$ be the set of colors $c \in L(y)$ such that there is no edge in $H$ between $c$ and $\psi(v)$ for any neighbor $v \in V(G) \setminus V(Q)$ of $y$.
Then $|L'(y)| \geq k - d_G(y) + d_{G[Q]}(y)$ for all $y \in V(Q)$.
By (FORB) with $I = \emptyset$, $G[V(Q)]$ admits an $(H', L')$-coloring.
Choose one uniformly at random among all such colorings and extend $\psi$ to an $(H, L)$-coloring $\phi$ of $G$.

We first verify (ii).
Let $I_1 = I \setminus V(Q)$ and $I_2 = I \cap V(Q)$.
By the induction hypothesis, $\operatorname{Prob}[\phi(v) \neq c] \geq p^{|I_1|}$ for all $v \in I_1$.
If $I_2 = \emptyset$, we are done.
Otherwise, for $y \in I_2$ define $L_c(y) = L'(y) \setminus \{c'\}$, where $c$ is matched to $c'\in L'(y)$ in $H$; for $y \in V(Q) \setminus I_2$, let $L_c(y) = L'(y)$.
Then $|L_c(y)| \geq k - d_G(y) + d_{G[Q]}(y) - \mathbf{1}_I(y)$.
By (FORB), $G[V(Q)]$ has an $(H_Q, L_c)$-coloring, where $H_Q$ is the subgraph of $H$ induced by the vertices in $Q$.
Since $G[V(Q)]$ has at most $k^{b}$ such colorings, the probability that $\phi(y) \neq c$ for all $y \in I_2$ is at least $1/k^{b} = p \geq p^{|I_2|}$.
Thus the probability that $\phi(v) \neq c$ for all $v \in I$ is at least $p^{|I_1| + |I_2|} \geq p^{|I|}$, proving (ii).

Now we prove (i).
If $v \in V(G) \setminus V(Q)$, the claim follows from the induction hypothesis.
Assume $v \in V(Q)$.
Let $I$ be the set of neighbors of $v$ in $V(G) \setminus V(Q)$.
Since $G$ contains no graph from $\mathcal{F}$ and $B \subseteq V(G) \setminus V(Q)$, the set $I$ is $\mathcal{F}$-forbidding in $G - V(Q)$.
Moreover, (FORB) implies $|I| \leq k-2$.
By the induction hypothesis, $\operatorname{Prob}[\psi(u) \neq c] \geq p^{k-2}$ for all $u \in I$.
Assuming this is the case, (FIX) implies there exists an $(H_Q, L')$-coloring of $G[V(Q)]$ with $\phi(v) = c$.
Since there are at most $k^{b}$ such colorings, the probability that $\phi(v) = c$ is at least $p^{k-2} / k^{b} = \varepsilon$.
Hence (i) holds.
\end{proof}

\section{\bf Properties of minimal counterexamples}
Let $\mathcal{F}$ consist of $4$-cycles and intersecting triangles. Let $G$ be a counterexample to Theorem~\ref{main} with minimum $|V(G)|$. Fix a plane embedding of $G$, and note that by minimality $G$ is connected. Let $(H,L)$ be a $4$-cover of $G$. We will show that the following configurations cannot appear in $G$:
\begin{itemize}
\item[(RC1)] A $2^-$-vertex.
\item[(RC2)] A $3$-face with two $3$-vertices.
\item[(RC3)] A $5$-face $f=[v_1v_2v_3v_4v_5]$ with $d(v_1)=d(v_4)=3$, $d(v_2)=d(v_3)=4$, and each of $v_2,v_3$ on a $3$-face (which could be the same for both).
\item[(RC4)] A $3^+$-vertex $v$ with at least $d(v)-1$ neighbors of degree $3$.
\item[(RC5)] A $3^+$-vertex $v$ on a $3$-face with at least $d(v)-2$ neighbors of degree $3$.
\item[(RC6)] A $3^+$-vertex $v$ with at least $d(v)-2$ pendent $3$-faces.
\item[(RC7)] A $3$-path $P=v_1v_2v_3$ with $d(v_1)\ge3$, $d(v_2)\ge3$, $d(v_3)\ge4$, and $v_i$ having at least $d(v_i)-3$ pendent $3$-neighbors in $N(v_i)\setminus V(P)$ for each $i\in[3]$.
\item[(RC8)] A $4$-path $v_1v_2v_3v_4$ with $d(v_1),d(v_3),d(v_4)\ge3$, $d(v_2)=4$, and $v_2$ on a $3$-face, where for each $i\in\{1,3,4\}$, $v_i$ has at least $d(v_i)-3$ pendent $3$-neighbors in $N(v_i)\setminus V(P)$.
\item[(RC9)] A poor $5$-face $f$ weakly incident to no special $5^+$-vertices. (the definitions appear later)
\end{itemize}

\begin{figure}[htbp]
	\centering

	% (RC2)
	\begin{minipage}[b]{0.31\textwidth}
		\centering
		\begin{tikzpicture}[scale=.65]
			\begin{scope}[xshift=4cm]			
				\draw (0,0) node[circle, draw=black, fill=white, inner sep=0mm, minimum size=1.6mm, label={right: $w$}] (w){};
				\draw (-1,-1.732) node[circle, draw=black, fill=black, inner sep=0mm, minimum size=1.6mm, label={left: $u$}](u){};	
				\draw (1,-1.732) node[circle, draw=black, fill=black, inner sep=0mm, minimum size=1.6mm, label={right: $v$}] (v){};
				\draw (w)--(u)--(v)--(w);
				\draw(w) -- (-0.5,0.866);
				\draw(w) -- (-0.2,0.980);
				\draw(w) -- (0.2,0.980);
				\draw(w) -- (0.5,0.866);
				\draw(u) -- (-1.5,-2.598);
				\draw(v) -- (1.5,-2.598);
				\draw (0,-4) node {(RC2)};
			\end{scope}
		\end{tikzpicture}
	\end{minipage}
	\hfill
	% (RC3)
	\begin{minipage}[b]{0.31\textwidth}
		\centering
		\begin{tikzpicture}[scale=.7]		
				\draw (0,0) node[circle, draw=black, fill=white, inner sep=0mm, minimum size=1.6mm, label={right: $v_5$}] (v_5){};
				\draw (1.214,-0.882) node[circle, draw=black, fill=black, inner sep=0mm, minimum size=1.6mm, label={right: $v_1$}](v_1){};	
				\draw (0.750,-2.308) node[circle, draw=black, fill=black, inner sep=0mm, minimum size=1.6mm, label={right: $v_2$}] (v_2){};
				\draw (-0.750,-2.308) node[circle, draw=black, fill=black, inner sep=0mm, minimum size=1.6mm, label={left: $v_3$}] (v_3){};
				\draw (-1.214,-0.882) node[circle, draw=black, fill=black, inner sep=0mm, minimum size=1.6mm, label={left: $v_4$}](v_4){};	
				\draw (v_5)--(v_1)--(v_2)--(v_3)--(v_4)--(v_5);
				\draw(v_5) -- (-0.375,0.650);
				\draw(v_5) -- (-0.15,0.735);
				\draw(v_5) -- (0.15,0.735);
				\draw(v_5) -- (0.375,0.650);
				\draw(v_1) -- (1.821,-0.441);
				\draw(v_4) -- (-1.821,-0.441);
				\draw(v_2) -- (1.66,-2.90);
				\draw(v_2) -- (1.04,-3.36);
				\draw(1.66,-2.90) -- (1.04,-3.36);
				\draw(v_3) -- (-1.66,-2.90);
				\draw(v_3) -- (-1.04,-3.36);
				\draw(-1.66,-2.90) -- (-1.04,-3.36);
				\draw (0,-4) node {(RC3)};
		\end{tikzpicture}
	\end{minipage}
	\hfill
	% (RC4)
	\begin{minipage}[b]{0.31\textwidth}
		\centering
		\begin{tikzpicture}[scale=.55]	
				\draw (0,0) node[circle, draw=black, fill=black, inner sep=0mm, minimum size=1.6mm, label={right: $v$}] (v){};
				\draw (1,-1.732) node[circle, draw=black, fill=black, inner sep=0mm, minimum size=1.6mm, label={}](v_1){};	
				\draw (2.5,-1.732) node[circle, draw=black, fill=black, inner sep=0mm, minimum size=1.6mm, label={}] (v_2){};
				\draw (-1,-1.732) node[circle, draw=black, fill=black, inner sep=0mm, minimum size=1.6mm, label={}](v_3){};	
				\draw (-2.5,-1.732) node[circle, draw=black, fill=black, inner sep=0mm, minimum size=1.6mm, label={}] (v_4){};
				\draw (v)--(v_1);
				\draw (v)--(v_2);
				\draw (v)--(v_3);
				\draw (v)--(v_4);
				\draw (v)--(0,1.5);
				\draw (v_1)--(0.5,-3);
				\draw (v_1)--(1.5,-3);
				\draw (v_2)--(2,-3);
				\draw (v_2)--(3,-3);
				\draw (v_3)--(-0.5,-3);
				\draw (v_3)--(-1.5,-3);
				\draw (v_4)--(-2,-3);
				\draw (v_4)--(-3,-3);
				\draw (0, -1.732) node {$\dots$};
				\draw (0,-4) node {(RC4)};
		\end{tikzpicture}
	\end{minipage}

\vspace{1.2em}
% (RC5)
\begin{minipage}[b]{0.31\textwidth}
	\centering
	\begin{tikzpicture}[scale=.55]			
			\draw (0,0) node[circle, draw=black, fill=black, inner sep=0mm, minimum size=1.6mm, label={right: $v$}] (v){};
			\draw (1,-1.732) node[circle, draw=black, fill=black, inner sep=0mm, minimum size=1.6mm, label={}](v_1){};	
			\draw (2.5,-1.732) node[circle, draw=black, fill=black, inner sep=0mm, minimum size=1.6mm, label={}] (v_2){};
			\draw (-1,-1.732) node[circle, draw=black, fill=black, inner sep=0mm, minimum size=1.6mm, label={}](v_3){};	
			\draw (-2.5,-1.732) node[circle, draw=black, fill=black, inner sep=0mm, minimum size=1.6mm, label={}] (v_4){};
			\draw (v)--(v_1);
			\draw (v)--(v_2);
			\draw (v)--(v_3);
			\draw (v)--(v_4);
			\draw (v)--(0.9,1.4);
			\draw (v)--(-0.9,1.4);
			\draw (0.9,1.4)--(-0.9,1.4);
			\draw (v_1)--(0.5,-3);
			\draw (v_1)--(1.5,-3);
			\draw (v_2)--(2,-3);
			\draw (v_2)--(3,-3);
			\draw (v_3)--(-0.5,-3);
			\draw (v_3)--(-1.5,-3);
			\draw (v_4)--(-2,-3);
			\draw (v_4)--(-3,-3);
			\draw (0, -1.732) node {$\dots$};
			\draw (0,-4) node {(RC5)};
	\end{tikzpicture}
\end{minipage}
\hfill
% (RC6)
\begin{minipage}[b]{0.31\textwidth}
	\centering
	\begin{tikzpicture}[scale=.55]
			\draw (0,0) node[circle, draw=black, fill=black, inner sep=0mm, minimum size=1.6mm, label={right: $v$}] (v){};
			\draw (1,-1.732) node[circle, draw=black, fill=black, inner sep=0mm, minimum size=1.6mm, label={}](v_1){};	
			\draw (2.5,-1.732) node[circle, draw=black, fill=black, inner sep=0mm, minimum size=1.6mm, label={}] (v_2){};
			\draw (-1,-1.732) node[circle, draw=black, fill=black, inner sep=0mm, minimum size=1.6mm, label={}](v_3){};	
			\draw (-2.5,-1.732) node[circle, draw=black, fill=black, inner sep=0mm, minimum size=1.6mm, label={}] (v_4){};
			\draw (v)--(v_1);
			\draw (v)--(v_2);
			\draw (v)--(v_3);
			\draw (v)--(v_4);
			\draw (v)--(0.9,1.2);
			\draw (v)--(-0.9,1.2);
			\draw (v_1)--(0.5,-3);
			\draw (v_1)--(1.5,-3);
			\draw (0.5,-3)--(1.5,-3);
			\draw (v_2)--(2,-3);
			\draw (v_2)--(3,-3);
			\draw (2,-3)--(3,-3);
			\draw (v_3)--(-0.5,-3);
			\draw (v_3)--(-1.5,-3);
			\draw (-0.5,-3)--(-1.5,-3);
			\draw (v_4)--(-2,-3);
			\draw (v_4)--(-3,-3);
			\draw (-2,-3)--(-3,-3);
			\draw (0, -1.732) node {$\dots$};
			\draw (0,-4) node {(RC6)};
	\end{tikzpicture}
\end{minipage}
\hfill
% (RC7)
\begin{minipage}[b]{0.31\textwidth}
	\centering
	\begin{tikzpicture}[scale=.55]			
			\draw (0,0) node[circle, draw=black, fill=black, inner sep=0mm, minimum size=1.6mm, label={above right: $v_2$}] (v_2){};
			\draw (-3.2,0) node[circle, draw=black, fill=black, inner sep=0mm, minimum size=1.6mm, label={left: $v_1$}] (v_1){};
			\draw (3.2,0) node[circle, draw=black, fill=black, inner sep=0mm, minimum size=1.6mm, label={right: $v_3$}] (v_3){};
			\draw (0.9,-1.6) node[circle, draw=black, fill=black, inner sep=0mm, minimum size=1.6mm, label={}](u_21){};	
			\draw (-0.9,-1.6) node[circle, draw=black, fill=black, inner sep=0mm, minimum size=1.6mm, label={}](u_22){};
			\draw (2.3,-1.6) node[circle, draw=black, fill=black, inner sep=0mm, minimum size=1.6mm, label={}] (u_31){};
			\draw (4.1,-1.6) node[circle, draw=black, fill=black, inner sep=0mm, minimum size=1.6mm, label={}] (u_32){};
			\draw (-2.3,-1.6) node[circle, draw=black, fill=black, inner sep=0mm, minimum size=1.6mm, label={}] (u_11){};
			\draw (-4.1,-1.6) node[circle, draw=black, fill=black, inner sep=0mm, minimum size=1.6mm, label={}] (u_12){};
			\draw (v_1)--(v_2)--(v_3);
			\draw (v_2)--(0,1);
			\draw (v_3)--(2.5,0.9);
			\draw (v_3)--(3.9,0.9);
			\draw (v_1)--(-2.5,0.9);
			\draw (v_1)--(-3.9,0.9);
			\draw (v_2)--(u_21);
			\draw (v_2)--(u_22);
			\draw (v_3)--(u_31);
			\draw (v_3)--(u_32);
			\draw (v_1)--(u_11);
			\draw (v_1)--(u_12);
			\draw (u_21)--(0.45,-2.8);
			\draw (u_21)--(1.35,-2.8);
			\draw (0.45,-2.8)--(1.35,-2.8);
			\draw (u_22)--(-0.45,-2.8);
			\draw (u_22)--(-1.35,-2.8);
			\draw (-0.45,-2.8)--(-1.35,-2.8);
			\draw (u_31)--(1.85,-2.8);
			\draw (u_31)--(2.75,-2.8);
			\draw (1.85,-2.8)--(2.75,-2.8);
			\draw (u_32)--(3.65,-2.8);
			\draw (u_32)--(4.55,-2.8);
			\draw (3.65,-2.8)--(4.55,-2.8);
			\draw (u_11)--(-1.85,-2.8);
			\draw (u_11)--(-2.75,-2.8);
			\draw (-1.85,-2.8)--(-2.75,-2.8);
			\draw (u_12)--(-3.65,-2.8);
			\draw (u_12)--(-4.55,-2.8);
			\draw (-3.65,-2.8)--(-4.55,-2.8);
			\draw (0, -1.6) node {$\dots$};
			\draw (3.2, -1.6) node {$\dots$};
			\draw (-3.2, -1.6) node {$\dots$};
			\draw (0,-4) node {(RC7)};
	\end{tikzpicture}
\end{minipage}

\vspace{1.2em}
% (RC8)
\begin{minipage}[b]{\textwidth}
	\centering
	\begin{tikzpicture}[scale=.55]		
			\draw (1.6,0) node[circle, draw=black, fill=black, inner sep=0mm, minimum size=1.6mm, label={above right: $v_3$}] (v_3){};
			\draw (-1.6,0) node[circle, draw=black, fill=black, inner sep=0mm, minimum size=1.6mm, label={below: $v_2$}] (v_2){};
			\draw (-4.8,0) node[circle, draw=black, fill=black, inner sep=0mm, minimum size=1.6mm, label={left: $v_1$}] (v_1){};
			\draw (4.8,0) node[circle, draw=black, fill=black, inner sep=0mm, minimum size=1.6mm, label={right: $v_4$}] (v_4){};
			\draw (2.5,-1.6) node[circle, draw=black, fill=black, inner sep=0mm, minimum size=1.6mm, label={}](u_31){};	
			\draw (0.7,-1.6) node[circle, draw=black, fill=black, inner sep=0mm, minimum size=1.6mm, label={}](u_32){};
			\draw (3.9,-1.6) node[circle, draw=black, fill=black, inner sep=0mm, minimum size=1.6mm, label={}] (u_41){};
			\draw (5.7,-1.6) node[circle, draw=black, fill=black, inner sep=0mm, minimum size=1.6mm, label={}] (u_42){};
			\draw (-3.9,-1.6) node[circle, draw=black, fill=black, inner sep=0mm, minimum size=1.6mm, label={}] (u_11){};
			\draw (-5.7,-1.6) node[circle, draw=black, fill=black, inner sep=0mm, minimum size=1.6mm, label={}] (u_12){};
			\draw (v_1)--(v_2)--(v_3)--(v_4);
			\draw (v_3)--(1.6,1);
			\draw (v_4)--(4.1,0.9);
			\draw (v_4)--(5.5,0.9);
			\draw (v_1)--(-4.1,0.9);
			\draw (v_1)--(-5.5,0.9);
			\draw (v_2)--(-0.9,1);
			\draw (v_2)--(-2.3,1);
			\draw (-0.9,1)--(-2.3,1);
			\draw (v_4)--(u_41);
			\draw (v_4)--(u_42);
			\draw (v_3)--(u_31);
			\draw (v_3)--(u_32);
			\draw (v_1)--(u_11);
			\draw (v_1)--(u_12);
			\draw (u_31)--(2.05,-2.8);
			\draw (u_31)--(2.95,-2.8);
			\draw (2.05,-2.8)--(2.95,-2.8);
			\draw (u_32)--(1.15,-2.8);
			\draw (u_32)--(0.25,-2.8);
			\draw (1.15,-2.8)--(0.25,-2.8);
			\draw (u_41)--(3.45,-2.8);
			\draw (u_41)--(4.35,-2.8);
			\draw (3.45,-2.8)--(4.35,-2.8);
			\draw (u_42)--(5.25,-2.8);
			\draw (u_42)--(6.15,-2.8);
			\draw (5.25,-2.8)--(6.15,-2.8);
			\draw (u_11)--(-3.45,-2.8);
			\draw (u_11)--(-4.35,-2.8);
			\draw (-3.45,-2.8)--(-4.35,-2.8);
			\draw (u_12)--(-5.25,-2.8);
			\draw (u_12)--(-6.15,-2.8);
			\draw (-5.25,-2.8)--(-6.15,-2.8);
			\draw (1.6, -1.6) node {$\dots$};
			\draw (4.8, -1.6) node {$\dots$};
			\draw (-4.8, -1.6) node {$\dots$};
			\draw (0,-4) node {(RC8)};
	\end{tikzpicture}
\end{minipage}
	\caption{examples of (RC2)-(RC8)}
\end{figure}

Note that (FORB) of Definition~\ref{d1} in particular implies that $k-d_G+d_{(S-B)}\ge2$ for all $v\in V(S)$. So in the proofs of Lemmas~\ref{delta} to~\ref{special} below, the case $|I|=1$ in (FORB) is implied by (FIX). Therefore by Lemma~\ref{DP2}, we only need to check the cases for (FIX) and for $|I|=2$ in (FORB) in order to prove Lemmas~\ref{delta} to~\ref{special}. Lemmas~\ref{delta} and~\ref{3-face} are from~\cite{CCFHMM22}, we include the proofs here for completeness.

\begin{lemma}(\cite{CCFHMM22})\label{delta}
(RC1) is $(\mathcal{F},4)$-boundary-reducible.
\end{lemma}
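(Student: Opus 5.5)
We take $S$ to be the subgraph induced by a $2^-$-vertex $v$ together with its neighbors, and let the boundary be $B=N(v)$, so that $S-B$ is the single vertex $v$ and $d_{S-B}(v)=0$. With $k=4$, the relevant function on $S-B$ is
\[
4-d_G(v)+d_{S-B}(v)=4-d_G(v)\ge 2 .
\]
The degenerate cases cause no trouble. If $v$ is isolated, $B=\emptyset$ is still a proper subset of $V(S)$. Connectivity of the minimal counterexample rules out the empty-neighbor case anyway, but the argument does not depend on it. In every case $B\subsetneq V(S)$ and $|V(S)\setminus B|=1\le b$.

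We then verify the two conditions of Definition~\ref{d1} for the one-vertex graph $S-B=\{v\}$.

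For (FIX), the only choice is the vertex $v$ itself. The function $(4-d_G+d_{S-B})\downarrow v$ assigns $1$ to $v$, so any such cover has $|L(v)|\ge1$. A single vertex with a nonempty list is trivially colorable.

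For (FORB), let $I\subseteq\{v\}$ have size at most $k-2=2$. Then $|I|\le1$, so the assigned list size is at least
\[
4-d_G(v)-\mathbf{1}_I(v)\ge 2-1=1 .
\]
Again the single vertex $v$ is colorable from a nonempty list.

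This also matches the remark preceding the lemma: the case $|I|=2$ cannot occur because $|V(S)\setminus B|=1$, and the case $|I|=1$ is covered by (FIX). No real obstacle is expected. The only points needing care are that the boundary is a proper subset and that the inequality $4-d_G(v)\ge2$ is exactly where $d(v)\le2$ is used.
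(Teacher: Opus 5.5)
Your proof is correct and is essentially the paper's argument. The paper takes $S=\{v\}$ with $B=\emptyset$ rather than $S=N[v]$ with $B=N(v)$. This makes no difference, because (FIX) and (FORB) depend only on $S-B=\{v\}$ and $d_G(v)$.
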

\begin{proof}
Let $S$ be the graph induced by a $2^-$-vertex $v$, and set the boundary $B=\emptyset$. (FIX) holds since $S$ has only the one vertex $v$. (FORB) also holds since there is still an available color for $v$.
\end{proof}

\begin{lemma}(\cite{CCFHMM22})\label{3-face}
(RC2) is $(\mathcal{F},4)$-boundary-reducible.
\end{lemma}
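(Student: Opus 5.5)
Let the $3$-face be $[uvw]$ with $d(u)=d(v)=3$. By (RC1) we may assume $G$ has minimum degree at least $3$, though that is not needed. I will take $S=G[\{u,v\}]$, which is the edge $uv$, with boundary $B=\emptyset$. Then $S-B$ is the single edge $uv$. Each of $u,v$ has degree $3$ in $G$ and degree $1$ in $S$, so the list-size function is $k-d_G+d_{S}=4-3+1=2$ at both vertices. It remains to verify (FIX) and (FORB) for an edge whose two ends carry cover lists of size $2$.

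\emph{(FIX).} Suppose $u$ has its list reduced to a single color $c$ and $v$ has a list of size $2$. We color $u$ with $c$. In $H$, the color $c$ is matched to at most one color of $L(v)$. Hence at least one color of $v$ remains available, and we use it. The case where $v$ is the fixed vertex is symmetric.

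\emph{(FORB).} By the remark preceding Lemma~\ref{delta}, the case $|I|\le1$ follows from (FIX), so only $I=\{u,v\}$ needs checking. Here we must decide whether $\{u,v\}$ is $\mathcal{F}$-forbidding. Suppose a new vertex $x$ is joined to both $u$ and $v$. Then $xuv$ is a triangle, and it shares the edge $uv$, hence the vertices $u$ and $v$, with the triangle $uvw$. Moreover $xuwv$ is a $4$-cycle. So $I=\{u,v\}$ is not $\mathcal{F}$-forbidding, and no assignment with list sizes $1,1$ needs to be handled.

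The only subtle point is this last observation: (FORB) would fail for $I=\{u,v\}$, since two singleton lists may conflict. The check that adding a common neighbor of $u$ and $v$ creates a forbidden structure is what closes the proof. Combining the two parts, (RC2) is $(\mathcal{F},4)$-boundary-reducible with $|V(S)\setminus B|=2$.
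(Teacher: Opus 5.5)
Your list sizes and your (FIX) argument match the paper's. The (FORB) step, however, fails because of your choice $S=G[\{u,v\}]$, $B=\emptyset$. In the boundary-reducible framework, whether a set $I\subseteq V(S)\setminus B$ is $\mathcal{F}$-forbidding is decided inside the configuration $S$. You add a new vertex $x$ adjacent to $I$ to the graph $S$, boundary included, and ask whether a member of $\mathcal{F}$ appears.

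That is the only job of the boundary. Adding boundary vertices to $S$ changes neither which vertices get recolored nor their list sizes $4-d_G+d_{S-B}$. So boundary vertices are there solely to witness forbidden structures.

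With your $S$, the graph $S+x$ is just the triangle $xuv$, which contains neither a $4$-cycle nor two intersecting triangles. Hence $\{u,v\}$ is $\mathcal{F}$-forbidding, and (FORB) requires coloring the edge $uv$ from two singleton lists. As you note yourself, this fails when the two colors are matched in $H$. Your witnesses, the $4$-cycle $xuwv$ and the triangle $uvw$, use $w\notin V(S)$, so they do not count.

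The repair is exactly the paper's choice: take $S=G[\{u,v,w\}]$ with boundary $B=\{w\}$. Then $S-B$ is still the edge $uv$, the list sizes are still $4-3+1=2$, and your (FIX) argument goes through verbatim. Now the $4$-cycle $xuwv$ lies in $S+x$, so $\{u,v\}$ is genuinely not $\mathcal{F}$-forbidding and only (FIX) needs checking.

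Your version would be valid if ``forbidding'' were tested in the whole host graph $G$ (the paper's wording of the definition is admittedly garbled). But under that reading every pair $(S,B)$ in the paper could be replaced by $(S-B,\emptyset)$, and the boundaries in (RC2)--(RC9) would serve no purpose. That is not how the definition is meant. Note, for instance, that $w$ is drawn as a hollow boundary vertex in the (RC2) figure.
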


\begin{proof}
Let $S=[uvw]$ be a $3$-face with $d(u)=d(v)=3$, and set the boundary $B=\{w\}$.

(FIX): Each of $u,v$ has two available colors. Fix a color $\phi(u)$ and choose an available color for $\phi(v)$ that is not $\phi(u)$ to extend the coloring. Fixing a color for $v$ is symmetric.

(FORB): Let $I\subset V(S)\setminus B$ with $|I|=2$. Then $I$ is not $\mathcal{F}$-forbidding, since connecting a new vertex to both $u$ and $v$ creates a $4$-cycle.
\end{proof}

\begin{lemma}\label{5-face}
(RC3) is $(\mathcal{F},4)$-boundary-reducible.
\end{lemma}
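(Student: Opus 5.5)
The plan is to take the four vertices $v_1,v_2,v_3,v_4$ as the part to be deleted and use the face vertex $v_5$ as the boundary. So set $S=G[\{v_1,\dots,v_5\}]$, $B=\{v_5\}$ and $Q=S-B=G[\{v_1,v_2,v_3,v_4\}]$. Because $|V(Q)|=4$, this is a bounded-size piece as Lemma~\ref{DP2} requires. By the remark before Lemma~\ref{delta}, only (FIX) and the case $|I|=2$ of (FORB) need to be checked.

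\textbf{Step 1: $Q$ is an induced path.} The edges $v_1v_2$, $v_2v_3$ and $v_3v_4$ lie on $f$. The three possible chords are excluded as follows.
\begin{itemize}
\item $v_1v_3$ would create the $4$-cycle $v_1v_3v_4v_5$.
\item $v_2v_4$ would create the $4$-cycle $v_2v_4v_5v_1$.
\item $v_1v_4$ would create the $4$-cycle $v_1v_2v_3v_4$.
\end{itemize}
So $Q$ is the path $v_1v_2v_3v_4$, and $d_Q(v_1)=d_Q(v_4)=1$, $d_Q(v_2)=d_Q(v_3)=2$. Hence $k-d_G+d_Q$ equals $4-3+1=2$ at $v_1$ and $v_4$, and $4-4+2=2$ at $v_2$ and $v_3$. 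Every vertex of $Q$ therefore keeps at least $2$ available colors in any residual cover.

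\textbf{Step 2: (FIX).} Suppose a vertex $v_i$ has its list reduced to a single color. Color $v_i$ with it first. Then move outward along the path in both directions, coloring each vertex in turn. Each later vertex has only its predecessor on the path already colored among its neighbors in $Q$, so at least $2-1=1$ of its colors remains available. This is the usual argument that a tree with lists of size $2$ is DP-colorable starting from any root. It works unchanged for covers, because each edge of $H$ between $L(u)$ and $L(v)$ is a matching, so coloring one neighbor removes at most one color.

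\textbf{Step 3: (FORB) with $|I|=2$.} Here the aim is to show that no $2$-subset $I\subseteq\{v_1,v_2,v_3,v_4\}$ is $\mathcal F$-forbidding, so this case is vacuous. Add a new vertex $z$ adjacent to both vertices of $I$ and check each pair.
\begin{itemize}
\item $\{v_1,v_3\}$ or $\{v_2,v_4\}$: $z$ lies on a $4$-cycle through the common neighbor, namely $v_2$ or $v_3$.
\item $\{v_1,v_4\}$: $zv_1v_5v_4$ is a $4$-cycle.
\item $\{v_1,v_2\}$, $\{v_2,v_3\}$ or $\{v_3,v_4\}$: $z$ forms a triangle with the pair. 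This triangle contains $v_2$ or $v_3$, each of which lies on a $3$-face by hypothesis, so the two triangles intersect.
\end{itemize}
The last case needs a little care when the new triangle might coincide with the given $3$-face. That cannot happen, because $z$ is a new vertex, so the triangle through $z$ is distinct from the $3$-face and the two still share a vertex.

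The main obstacle I expect is Step 3, where every pair must be shown non-forbidding. This is precisely where the hypothesis that $v_2$ and $v_3$ lie on $3$-faces is used, so that the adjacent pairs create intersecting triangles. The chord exclusions in Step 1 also need checking so that the degree counts $d_Q$ are correct.
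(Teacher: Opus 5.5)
The gap is in your choice $S=G[\{v_1,\dots,v_5\}]$, $B=\{v_5\}$, which breaks Step 3. Your Steps 1 and 2 are correct and match the paper: it deletes the same set $S-B=\{v_1,v_2,v_3,v_4\}$, each vertex keeps at least two colors, and (FIX) is handled by propagating along the path.

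In Definition~\ref{d1}, whether a set $I\subseteq V(S)\setminus B$ is $\mathcal{F}$-forbidding is judged in $S$ plus a new vertex $z$ joined to $I$, not in $G$ plus $z$. That is the only role the boundary plays. The list sizes $k-d_G+d_{S-B}$ depend only on $G$ and $S-B$, so if forbiddenness were judged in $G$, $B$ could always be thrown away. This is also why the paper's $S$ always contains the triangles its (FORB) arguments use.

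Your $S$ is just the chordless $5$-cycle $f$, since any chord of a $5$-cycle creates a $4$-cycle. So the $3$-faces at $v_2$ and $v_3$ are not in $S$. Take $I=\{v_1,v_2\}$. Then $S+z$ contains exactly one triangle, $zv_1v_2$, and no $4$-cycle, so $I$ \emph{is} $\mathcal{F}$-forbidding. (FORB) would then require coloring the path $v_1v_2v_3v_4$ from lists of sizes $1,1,2,2$. This fails whenever the unique colors left at $v_1$ and $v_2$ are matched in $H$. The same failure occurs for $\{v_2,v_3\}$ and $\{v_3,v_4\}$.

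The repair is the paper's construction. Let $f_1,f_2$ be the $3$-faces at $v_2,v_3$. Take $S=G[V(f)\cup V(f_1)\cup V(f_2)]$ and $B=\{v_5\}\cup\big((V(f_1)\cup V(f_2))\setminus V(f)\big)$. Then $S-B$ is unchanged, so Steps 1 and 2 stand verbatim. Your Step 3 case analysis also becomes valid, because every $4$-cycle and every pair of intersecting triangles you exhibit now lies inside $S+z$.

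There is an alternative: the proof of Lemma~\ref{DP2} only ever applies (FORB) to sets that are forbidding in the current graph. So a version of the definition with forbiddenness judged in $G$ would still suffice there, and your argument proves that version. That is a change of definition, however, not a proof of the lemma as stated.
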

\begin{proof}
Let $f=[v_1v_2v_3v_4v_5]$ be a $5$-face with $d(v_1)=d(v_4)=3$, $d(v_2)=d(v_3)=4$, and each of $v_2,v_3$ on a $3$-face $f_1$ and $f_2$, respectively. Note that $f_1$ and $f_2$ may coincide. Let $S$ be the graph on $V(f)\cup V(f_1)\cup V(f_2)$ and set the boundary $B=\{v_5\}\cup\big((V(f_1)\cup V(f_2))\setminus V(f)\big)$. For each $i\in[4]$, $v_i$ has at least two available colors.

(FIX): If $v_1$ is fixed, then we greedily color $v_2,v_3,v_4$ in order. If $v_2$ is fixed, then we greedily color $v_1,v_3,v_4$ in order. Fixing a color for $v_3$ or $v_4$ is symmetric.

(FORB): Let $I\subset V(S)\setminus B$ with $|I|=2$. Then $I$ is not $\mathcal{F}$-forbidding, since connecting a new vertex to any pair of vertices in $V(S)\setminus B$ always creates either a $4$-cycle or intersecting triangles.
\end{proof}

\begin{lemma}\label{3+1}
(RC4) is $(\mathcal{F},4)$-boundary-reducible.
\end{lemma}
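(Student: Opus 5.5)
We take $S$ to be $v$ together with the $3$-neighbors of $v$ and their other neighbors, and we put everything except $v$ and those $3$-neighbors into the boundary. Then we verify (FIX) and the case $|I|=2$ of (FORB) by greedy colorings. By the remark before Lemma~\ref{delta}, these are the only cases to check.

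\textbf{Setup.} Let $d=d(v)\ge 3$ and let $u_1,\dots,u_{d-1}$ be $3$-neighbors of $v$. If all $d$ neighbors have degree $3$, we pick any $d-1$ of them. Let $S$ be the subgraph induced by $N[v]\cup\bigcup_i N(u_i)$, and set $B=V(S)\setminus Q$ with $Q=\{v,u_1,\dots,u_{d-1}\}$. The available list sizes in $Q$ are as follows.
\begin{itemize}
\item $v$ has at least $4-d+(d-1)=3$ available colors.
\item Each $u_i$ has at least $4-3+d_Q(u_i)$ available colors.
\end{itemize}
Two vertices $u_i,u_j$ are adjacent only if $vu_iu_j$ is a triangle. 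Since triangles are vertex-disjoint, at most one such pair exists. So every $u_i$ has at least $2$ available colors, and a $u_i$ lying in a triangle $vu_iu_j$ has at least $3$. Two $u_i$'s sharing a further neighbor would create a $4$-cycle, but this is irrelevant here because such vertices lie in $B$.

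\textbf{(FIX).} First suppose the color of $v$ is fixed. Each $u_i$ outside a triangle keeps at least $1$ color and is colored independently of the others. The possible triangle pair $u_j,u_k$ keeps at least $2$ colors each, so it can be colored greedily.

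Now suppose the color of some $u_i$ is fixed. We color $v$ next; it keeps at least $3-1=2$ colors. If $u_i$ lies in a triangle $vu_iu_j$, we then color $u_j$, which has at least $3-2=1$ color left. Every remaining $u_l$ has at least $2-1=1$ color left after $v$, except for a triangle pair $u_j,u_k$ not containing $u_i$, which has at least $3-1=2$ colors each and is colored greedily.

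\textbf{(FORB), $|I|=2$.}
\begin{itemize}
\item If $I=\{u_i,u_j\}$, a new vertex $x$ adjacent to both creates the $4$-cycle $xu_ivu_j$. So $I$ is not $\mathcal{F}$-forbidding.
\item If $I=\{v,u_i\}$, then $xvu_i$ is a triangle. Hence $I$ is forbidding only if neither $v$ nor $u_i$ lies on a triangle of $G$. This also excludes a triangle at $v$ through its $d$-th neighbor, and in particular there is no triangle pair among the $u$'s. The reduced lists then give $v$ at least $2$ colors, $u_i$ at least $1$ color, and every other $u_l$ at least $2$ colors. We color $u_i$, then $v$ (at least $2-1=1$ color left), then each $u_l$ (at least $2-1=1$ color left).
\end{itemize}

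The only delicate point is the bookkeeping of the single possible triangle $vu_ju_k$. It lowers the degree in $Q$ needed for the counts and forces the ordering used above. Otherwise the argument is routine. Clearly $B\subsetneq V(S)$ and $|Q|\le d(v)\le \Delta$; a uniform bound $b$ is not needed for the reducibility statement itself.
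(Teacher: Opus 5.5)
Your proof is correct and follows the same approach as the paper: the non-boundary part is $v$ together with its $3$-neighbors (with at least $3$ and at least $2$ available colors), (FIX) is done greedily, and in (FORB) only pairs $\{v,u_i\}$ can be forbidding, handled by coloring $u_i$, then $v$, then the remaining vertices. The one small difference is that you handle a possible triangle $vu_ju_k$ on two $3$-neighbors directly, instead of citing Lemma~\ref{3-face} to conclude that $S-B$ is a star, so your argument does not depend on (RC2) being absent; the extra boundary vertices $N(u_i)$ you include are harmless.
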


\begin{proof}
Let $v$ be a $3^+$-vertex with at least $d(v)-1$ neighbors of degree $3$. Let $A$ be the set of $3$-vertices in $N(v)$. Let $S$ be the graph on $N(v)\cup\{v\}$ and set the boundary $B=N(v)\setminus A$. Note that $v$ has at least three available colors and each vertex in $A$ has at least two available colors.

(FIX): By Lemma~\ref{3-face} any two $3$-neighbors of $v$ cannot be adjacent. So $S-B$ is a star. Fixing a color of an arbitrary vertex of $S-B$ extends to all of $S-B$, since all vertices have at least two available colors.

(FORB): Let $I\subset V(S)\setminus B$ with $|I|=2$. Then only subsets of $\{v,a\}$ are $\mathcal{F}$-forbidding, where $a\in A$. In this case we can greedily color $a$, then $v$, then all vertices in $A-a$ in order.
\end{proof}

\begin{lemma}\label{3+2}
(RC5) is $(\mathcal{F},4)$-boundary-reducible.
\end{lemma}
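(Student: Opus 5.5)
We take $S$ to be the graph on $\{v\}\cup N(v)$, where $v$ is a $3^+$-vertex on a $3$-face $[vxy]$ with at least $d(v)-2$ neighbors of degree $3$. Let $A$ be the set of $3$-neighbors of $v$, and set the boundary $B=N(v)\setminus A$, so $S-B=G[\{v\}\cup A]$.

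First we record the available list sizes. The vertex $v$ loses at most $|B|\le 2$ colors to the boundary, so it keeps at least two available colors. Each $a\in A$ has exactly one neighbor ($v$) inside $S-B$, so it keeps at least two available colors. By Lemma~\ref{3-face} no two $3$-vertices of $A$ are adjacent, since an edge $aa'$ in $A$ would make $vaa'$ a $3$-face with two $3$-vertices. Hence $S-B$ is a star centred at $v$.

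(FIX): if the color of $v$ is fixed, each leaf $a\in A$ still has at least one color not matched to $\phi(v)$. If a leaf $a$ is fixed, then $v$ has at least two colors, so one of them avoids the color matched to $\phi(a)$. The remaining leaves are then colored as before. Thus every single-vertex fixing extends.

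(FORB) with $|I|=2$: two leaves $a,a'\in A$ are never $\mathcal{F}$-forbidding, because a new vertex adjacent to both creates a $4$-cycle $vaza'$. The only candidate pairs are therefore $I=\{v,a\}$, where $v$ has at least one color and $a$ at least one color. Here we must use the triangle $[vxy]$. A new vertex $z$ joined to $v$ and $a$ closes the triangle $vaz$, which shares $v$ with $[vxy]$, giving intersecting triangles; the one exception would be if the new triangle were $[vxy]$ itself. This cannot happen: $a$ is not a boundary vertex here, so $a\notin\{x,y\}$ whenever $x,y\in B$. If instead $x\in A$, say, then $\{v,x\}$ with a new vertex $z$ forms a triangle $vxz$ sharing the edge $vx$ with $[vxy]$, which again gives intersecting triangles (and also a $4$-cycle $vzxy$). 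So no $2$-set is $\mathcal{F}$-forbidding, and (FORB) holds vacuously for $|I|=2$. The case $|I|=1$ follows from (FIX), as noted before Lemma~\ref{delta}.

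The main obstacle is the accounting when $v$ has fewer than $d(v)-2$ boundary neighbors and the triangle's vertices lie in $A$. In that case $v$ has even more colors, so the counting only improves. The forbidding argument only needs the triangle at $v$, so it goes through unchanged.
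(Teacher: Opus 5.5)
Your proof is correct and follows essentially the same route as the paper: you take $S=G[\{v\}\cup N(v)]$ with boundary $B=N(v)\setminus A$, use Lemma~\ref{3-face} to see that $S-B$ is a star in which every list has size at least two, and show that (FORB) with $|I|=2$ holds vacuously because every pair gives a $4$-cycle or intersecting triangles, which you spell out more explicitly than the paper does. Your worry that the new triangle $vaz$ might coincide with $[vxy]$ is unnecessary, since it contains the new vertex $z$.
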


\begin{proof}
Let $v$ be a $3^+$-vertex on a $3$-face $f=[uvw]$ with at least $d(v)-2$ neighbors of degree $3$. Let $A$ be the set of $3$-vertices in $N(v)$. Let $S$ be the graph on $N(v)\cup\{v\}$ and set the boundary $B=N(v)\setminus A$. By Lemma~\ref{3-face} any two $3$-neighbors of $v$ cannot be adjacent. So $S-B$ is a star. Observe that each vertex $x$ in $V(S)\setminus B$ has at least two available colors.

(FIX): Since $S-B$ is a star, fixing a color of an arbitrary vertex of $S-B$ extends to all of $S-B$, as all vertices have at least two available colors.

(FORB): Let $I\subset V(S)\setminus B$ with $|I|=2$. Then $I$ is not $\mathcal{F}$-forbidding, since connecting a new vertex to any pair of vertices in $V(S)\setminus B$ always creates either a $4$-cycle or intersecting triangles.
\end{proof}

\begin{lemma}\label{3+3}
(RC6) is $(\mathcal{F},4)$-boundary-reducible.
\end{lemma}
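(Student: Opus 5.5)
Let $v$ be a $3^+$-vertex with at least $d(v)-2$ pendent $3$-faces. Let $u_1,\dots,u_t$ ($t\ge d(v)-2$) be the pendent $3$-neighbors of $v$, with $u_i$ on the $3$-face $f_i=[u_ix_iy_i]$ not containing $v$. Since triangles are vertex-disjoint, the faces $f_i$ are pairwise distinct and disjoint. By Lemma~\ref{3-face}, each $f_i$ has at most one $3$-vertex, so $x_i,y_i$ are $4^+$-vertices.

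We take $S$ to be the graph induced by $\{v\}\cup\bigcup_i V(f_i)$, together with the remaining neighbors of $v$, and set the boundary $B=V(S)\setminus(\{v\}\cup\{u_1,\dots,u_t\})$. Then $S-B$ is the star with center $v$ and leaves $u_i$. The leaves are pairwise nonadjacent: two adjacent $u_i,u_j$ would give a triangle $vu_iu_j$ meeting $f_i$. For available colors, each $u_i$ is a $3$-vertex with exactly one neighbor ($v$) in $S-B$, so it keeps $4-3+1=2$ colors. The center $v$ has at most $2$ neighbors outside $S-B$ and $t$ inside, so it keeps at least $4-2=2$ colors. If $t=d(v)$ it keeps all $4$; if $t=d(v)-1$ it keeps $3$.

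(FIX) is then immediate, exactly as in Lemma~\ref{3+1}. If $v$'s color is fixed, each leaf $u_i$ loses at most one of its two colors and can still be colored. If some $u_i$'s color is fixed, we color $v$ from its at least $2$ colors avoiding the one color forbidden by $u_i$, and then color the other leaves.

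The main work is (FORB) with $|I|=2$, where the $\mathcal F$-structure must be used. Adding a new vertex $z$ adjacent to two leaves $u_i,u_j$ creates the $4$-cycle $zu_ivu_j$, so such an $I$ is not forbidding. Adding $z$ adjacent to $v$ and $u_i$ creates the triangle $zvu_i$, which intersects $f_i$ at $u_i$, so such an $I$ is not forbidding either. Hence no $2$-subset of $V(S)\setminus B$ is $\mathcal F$-forbidding, and (FORB) holds vacuously.

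The only delicate points to verify carefully are the degree bookkeeping for $v$ and the observation that $v$ itself cannot lie on some $f_i$. The latter holds because $v\notin f_i$ by the definition of a pendent $3$-face. We must also check that $v$ is not adjacent to $x_i$ or $y_i$: if it were, $vu_ix_i$ would be a triangle sharing $u_i$ with $f_i$, contradicting the absence of intersecting triangles. So the star structure and the color counts are valid.
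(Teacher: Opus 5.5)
Your proposal is correct and is essentially the paper's proof. You use the same $S$ (namely $v$, its neighbors, and the pendent $3$-faces) and the same boundary (everything except $v$ and the $3$-vertices $u_i$), the same observation that $S-B$ is a star in which every vertex keeps at least two colors (which gives (FIX)), and the same argument that no pair in $V(S)\setminus B$ is $\mathcal{F}$-forbidding (via the $4$-cycle $zu_ivu_j$ or the triangle $zvu_i$ meeting $f_i$); your explicit checks that the leaves are pairwise nonadjacent and that $v$ is not adjacent to $x_i,y_i$ are slightly more careful than the paper, which simply cites Lemma~\ref{3-face}.
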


\begin{proof}
Let $v$ be a $3^+$-vertex with at least $d(v)-2$ pendent $3$-faces. By Lemma~\ref{3-face}, each pendent $3$-face of $v$ is a $(3,4^+,4^+)$-face. Let $A=\{x:\ x\in V(f^*), \text{where } f^* \text{ is a pendent }3\text{-face of } v\}$.  Let $A_1$ be the set of all $3$-vertices in $A$. Let $S$ be the graph on $N(v)\cup\{v\}\cup A$ and set the boundary $B=(N(v)\cup A)\setminus A_1$. Note that $S-B$ is a star, and each vertex $x$ in $V(S)\setminus B$ has at least two available colors.

(FIX): By Lemma~\ref{3-face} any two $3$-neighbors of $v$ cannot be adjacent. So $S-B$ is a star. Fixing a color of an arbitrary vertex of $S-B$ extends to all of $S-B$, as all vertices have at least two available colors.

(FORB): Let $I\subset V(S)\setminus B$ with $|I|=2$. Then $I$ is not $\mathcal{F}$-forbidding, since connecting a new vertex to any pair of vertices in $V(S)\setminus B$ always creates either a $4$-cycle or intersecting triangles.
\end{proof}

\begin{lemma}\label{334p}
(RC7) is $(\mathcal{F},4)$-boundary-reducible.
\end{lemma}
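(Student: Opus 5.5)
Following the pattern of Lemmas~\ref{3+1}--\ref{3+3}, I would choose $S$ so that every vertex outside the boundary has two available colours except the path vertices, which get three. For each $i\in[3]$ let $A_i$ be a set of exactly $d(v_i)-3$ pendent $3$-neighbors of $v_i$ in $N(v_i)\setminus V(P)$, and let $A=A_1\cup A_2\cup A_3$. Let $S$ be induced by $V(P)\cup N(v_1)\cup N(v_2)\cup N(v_3)\cup N(A)$, and take $B=V(S)\setminus (V(P)\cup A)$. Then $S-B=G[V(P)\cup A]$. Each vertex of $A$ is a $3$-vertex on a $3$-face whose other two vertices lie in $B$ (by Lemma~\ref{3-face} they are $4^+$-vertices), so its only neighbor in $S-B$ is its $v_i$. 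Hence each vertex of $A$ has at least $4-3+1=2$ available colors. The vertex $v_i$ has $d(v_i)-3$ neighbors in $A$ plus its path neighbors in $S-B$, and all other neighbors lie in $B$. So $v_1,v_3$ have at least $4-d(v_i)+(d(v_i)-3)+1=2$ available colors and $v_2$ has at least $3$. I still need to check that $S-B$ is a tree, namely the path $P$ with pendant leaves. The sets $A_i$ are pairwise disjoint and no $a\in A_i$ is adjacent to $v_j$ for $j\ne i$: a common neighbor of $v_1,v_2$ other than the path would give a triangle or $4$-cycle through $P$, and the same holds for $v_1,v_3$ (a $4$-cycle $v_1v_2v_3a$). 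Also $v_1v_3\notin E(G)$, or else the triangle $v_1v_2v_3$ would share a vertex with the pendent $3$-face at $a\in A_i$ when $A_i\neq\emptyset$. If every $A_i$ is empty I would treat $v_1v_3$ separately.

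For (FIX), the argument is greedy on this tree. Since a leaf of $A$ always has two colors and exactly one colored neighbor, all leaves can be colored last. So it suffices to color $P$ with one vertex fixed. If $v_2$ is fixed, $v_1$ and $v_3$ each lose at most one color and keep one. If $v_1$ is fixed, color $v_2$ (3 colors, at most 1 lost), then $v_3$. Fixing $v_3$ is symmetric. Fixing a leaf $a\in A_i$ makes $v_i$ lose one color, so I would color $v_i$ first. If $i=2$, $v_2$ keeps $2$ colors and then $v_1,v_3$ follow. If $i=1$, $v_1$ keeps $1$, then $v_2$ keeps $2$, then $v_3$ keeps $1$. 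This is exactly where the hypothesis $d(v_3)\ge 4$ should matter, in the symmetric direction or in (FORB).

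For (FORB) with $|I|=2$, I would first discard the pairs that are not $\mathcal F$-forbidding. A new vertex adjacent to any vertex of $A$ creates a triangle meeting that vertex's pendent $3$-face. I also expect pairs at distance $2$ in $S-B$ to close a $4$-cycle. This leaves $I=\{v_1,v_3\}$ and pairs at distance $\ge3$ not touching $A$. With $I=\{v_1,v_3\}$, the endpoints have only $1$ color each, so $v_2$ (3 colors) loses at most $2$ and is colored last; the leaves are colored afterwards since none lie in $I$. I would check the remaining cases by the same greedy order, coloring the reduced vertices first.

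The main obstacle is the bookkeeping rather than any single coloring step. One has to verify that $S-B$ really is a tree, handling possible coincidences such as a pendent $3$-face shared by two $v_i$'s, and that the non-forbidding pairs are correctly identified using the no-$C_4$ and no-intersecting-triangles conditions. The hardest subcase is fixing a leaf at $v_1$ combined with few colors at $v_3$. There I would use $d(v_3)\ge4$: an extra neighbor of $v_3$ in $A_3$, or the degree count in the definition of the available lists, should give $v_3$ enough slack.
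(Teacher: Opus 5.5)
Your choice of $S$ and $B$ and the color counts essentially agree with the paper: two available colors at each vertex of $A$ and at $v_1,v_3$, and three at $v_2$. Your (FIX) case analysis is also fine.

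The gap is in (FORB). The sentence ``a new vertex adjacent to any vertex of $A$ creates a triangle meeting that vertex's pendent $3$-face'' is false. A new vertex $y$ joined to $I=\{x,z\}$ creates a triangle only when $xz\in E(G)$. If $x\in A$ and $z$ is at distance at least $3$ from $x$, then $y$ only closes a cycle of length at least $5$. For example, with $a_i\in A_i$:
\begin{itemize}
\item $\{a_1,a_3\}$ closes the $6$-cycle $ya_1v_1v_2v_3a_3$;
\item $\{a_1,a_2\}$, $\{a_1,v_3\}$, $\{v_1,a_3\}$ and $\{a_2,a_3\}$ close $5$-cycles.
\end{itemize}
All of these can be $\mathcal F$-forbidding and must be colored. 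The adjacent pairs $\{v_1,v_2\}$ and $\{v_2,v_3\}$ must also be treated: they only create a triangle $yv_1v_2$ (resp.\ $yv_2v_3$), which gives intersecting triangles only if one of these vertices already lies on a triangle.

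Your list of surviving cases is also off. The pair $\{v_1,v_3\}$ is \emph{not} forbidding: it closes the $4$-cycle $yv_1v_2v_3$, by your own distance-$2$ rule. There are no pairs at distance $\ge 3$ that avoid $A$, since $V(P)$ has diameter $2$. So the cases you actually color are essentially empty, and the cases that matter are the ones you omitted.

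The omitted cases are exactly where the third color at $v_2$ is needed, which is the reason the configuration is set up this way. For $I=\{a_1,a_3\}$, the leaves $a_1,a_3$ have one color each, which forces $v_1$ and $v_3$ down to a single color each. Then $v_2$ can still be colored only because it started with at least three available colors. The repair is what the paper does:
\begin{itemize}
\item For $\{v_1,v_2\}$ (or symmetrically $\{v_2,v_3\}$), color by BFS from $v_1$.
\item For every other forbidding pair $\{x,z\}$, color the unique $x$--$z$ path in the tree $S-B$ from both ends inward to $v_2$, which lies on it. Then extend to the remaining leaves.
\end{itemize}

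Two smaller points. First, your argument for $v_1v_3\notin E(G)$ does not work. The pendent $3$-face at $a\in A_i$ avoids $v_i$ by definition and, as you showed, avoids the rest of $P$. So it is vertex-disjoint from a triangle $v_1v_2v_3$, and no contradiction arises. That case is harmless, since each $v_i$ then has at least three available colors, but it needs its own sentence rather than a false exclusion. Second, the hypothesis $d(v_3)\ge 4$ plays no role in the coloring. Your own (FIX) analysis of a fixed leaf at $v_1$ already goes through without it.
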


\begin{proof}
Let $P=v_1v_2v_3$ be a $3$-path with $d(v_1)\ge3$, $d(v_2)\ge3$, $d(v_3)\ge4$, and $v_i$ having at least $d(v_i)-3$ pendent $3$-neighbors in $N(v_i)\setminus V(P)$ for each $i\in[3]$. By Lemma~\ref{3-face}, each pendent $3$-face of $v_i$ is a $(3,4^+,4^+)$-face, where $i\in[3]$. Let $A=\{x:\ x\in V(f^*),  f^* \text{ is a pendent }3\text{-face of a vertex } v\in V(P)\}$.  Let $A_1$ be the set of all $3$-vertices in $A$. Let $S$ be the graph on $N(v_1)\cup N(v_2)\cup N(v_3)\cup A$ and set the boundary $B=\big(\bigcup_{i=1}^3 N(v_i)\cup A\big)\setminus (A_1\cup V(P))$. Note that each vertex in $A_1\cup\{v_1,v_3\}$ has at least two available colors, and $v_2$ has at least three available colors.

(FIX): By Lemma~\ref{3-face} any two vertices in $A_1$ cannot be adjacent. So $S-B$ is a star. Fixing a color of an arbitrary vertex of $S-B$ extends to all of $S-B$, since all vertices have at least two available colors.

(FORB): Let $I\subset V(S)\setminus B$ with $|I|=2$. By symmetry, $I$ may be taken to be one of the pairs $\{v_1,v_2\}$, $\{a_1,a_2\}$, $\{a_1,v_3\}$, $\{a_1,a_3\}$, where $a_i\in A_i$ for each $i\in[3]$. Recall that $S-B$ is a tree. If $I=\{v_1,v_2\}$, then we pick $v_1$ as the root and perform a breadth-first search (BFS) of $S-B$. Since each vertex of $S-B-v_1$ has at least two available colors, we can color all vertices of $S-B$ from the root $v_1$ in BFS order. If $I=\{x,y\}\in\{\{a_1,a_2\},\{a_1,v_3\},\{a_1,a_3\}\}$, then there is a unique path $P'$ connecting $x$ and $y$ in $S-B$, and $v_2\in V(P')$. We pick $v_2$ as the root and perform a BFS of $P'$. Each leaf of $P'$ has at least one available color and each other vertex of $P'$ has at least two available colors, so we can color all vertices of $P'$ from the leaves to the root in reverse-BFS order. Since each component $C$ of $S-B-V(P')$ has exactly one neighbor $c$ in $P'$, 
we pick $c$ as the root of $C\cup \{c\}$ and perform a breadth-first search of $C\cup \{c\}$. Since each vertex of $C$ has at least two available colors, we can color all vertices of $C\cup \{c\}$ from the root in BFS order.
\end{proof}

\begin{lemma}\label{3433p}
(RC8) is $(\mathcal{F},4)$-boundary-reducible.
\end{lemma}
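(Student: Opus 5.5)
We mirror the proof of Lemma~\ref{334p}. Let $P=v_1v_2v_3v_4$ with $d(v_2)=4$ and $v_2$ on a $3$-face $f=[v_2xy]$. By Lemma~\ref{3-face}, every pendent $3$-face of $v_1,v_3,v_4$ is a $(3,4^+,4^+)$-face. Let $A$ be the vertex set of these pendent $3$-faces, and let $A_1$ be the set of pendent $3$-neighbors of $v_1,v_3,v_4$, taking exactly $d(v_i)-3$ of them in $N(v_i)\setminus V(P)$ for each $i\in\{1,3,4\}$. Put $S=G[\bigcup_{i=1}^4 N(v_i)\cup V(f)\cup A]$ and $B=V(S)\setminus(A_1\cup V(P))$.

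\textbf{Counting available colors.} Each $a\in A_1$ has only its parent $v_i$ inside $S-B$, so it keeps at least $4-3+1=2$ colors. The vertex $v_2$ has degree $4$; its neighbors $x,y$ are in $B$ and $v_1,v_3$ are in $S-B$, so it keeps $4-4+2=2$ colors. For $i\in\{1,4\}$, $v_i$ has one path-neighbor and $d(v_i)-3$ neighbors in $A_1$, so it keeps at least $4-d(v_i)+1+(d(v_i)-3)=2$ colors. Similarly $v_3$ keeps at least $4-d(v_3)+2+(d(v_3)-3)=3$ colors. Thus every vertex of $S-B$ has at least two colors, and $v_3$ has three.

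\textbf{Structure of $S-B$.} Vertices of $A_1$ are pairwise nonadjacent by Lemma~\ref{3-face}. A vertex of $A_1$ adjacent to two path vertices, or a chord of $P$, would create a $3$- or $4$-cycle. Such a triangle would share a vertex with $f$ or with a pendent triangle, contradicting the absence of $4$-cycles and intersecting triangles. So $S-B$ should be a tree: the path $P$ with pendant leaves from $A_1$ at $v_1,v_3,v_4$. This is the case analysis I expect to be the main obstacle. In particular, $v_1v_3$ and $v_2v_4$ are excluded since they form triangles through $v_2$ or $v_3$ that meet $f$ or another triangle, and $v_1v_4$ would form a $4$-cycle. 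Also, a vertex of $A_1$ may not be adjacent to both $v_i$ and $v_{i+2}$, since that gives a $4$-cycle. (FIX) then follows as before: fix any vertex and color the tree in BFS order from it, using that every vertex has at least two colors.

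\textbf{(FORB).} For (FORB) with $|I|=2$, first discard the non-forbidding pairs. Any pair containing $v_2$ is non-forbidding, since joining a new vertex to $v_2$ and a vertex at distance $1$ or $2$ creates a $4$-cycle or a triangle meeting $f$; the same goes for $\{v_1,v_3\}$. The remaining pairs $\{x,y\}$ are at distance at least $2$ in the tree, and the path $P'$ between them contains an internal vertex with at least two remaining colors. The good case is when that internal vertex is $v_3$, which has three colors and so still has two after losing one to $I$. For each such pair, root $P'$ at a suitable internal vertex, preferably $v_3$ or a vertex whose colors are not reduced. Color $P'$ from its leaves toward the root: leaves have at least $1$ color and internal vertices at least $2$. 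Then extend to the components of $S-B-V(P')$ in BFS order from their attachment vertex, exactly as in Lemma~\ref{334p}.

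Two pairs need a closer check: pairs lying in $\{v_1\}\cup A_1(v_1)$ together with $v_2$'s far side, and pairs around $v_4$. If a path has both ends reduced to one color and only internal $2$-color vertices, it is still colorable by the leaves-to-root argument, since the root has two neighbors on the path but keeps a free color only if it has three. I would therefore choose the root to be the endpoint-adjacent vertex and color the path greedily from one end, letting the last internal vertex see two colored neighbors. If this fails, I would show that such a pair creates a $4$-cycle and hence is not forbidding.
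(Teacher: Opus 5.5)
Your setup, the color counts (two colors everywhere on $S-B$, three at $v_3$) and your (FIX) argument agree with the paper. Your (FORB) argument, however, has a genuine gap: it never shows that every $\mathcal{F}$-forbidding pair can be colored using the spare color at $v_3$, and several of its specific claims fail.

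\begin{enumerate}
\item ``Any pair containing $v_2$ is non-forbidding'' is justified only for partners at distance $1$ or $2$. A pendent $3$-neighbor $z$ of $v_4$ is at distance $3$ from $v_2$, and a new vertex joined to $v_2$ and $z$ generically creates neither a triangle nor a $4$-cycle. So $\{v_2,z\}$ is in general forbidding and must be colored.
\item The adjacent pair $\{v_3,v_4\}$ is forbidding whenever neither vertex lies on a triangle. This contradicts your claim that ``the remaining pairs are at distance at least $2$''.
\item Consider a pair containing $v_3$, e.g.\ $\{v_3,a\}$ with $a$ a pendent $3$-neighbor of $v_1$. The path $v_3v_2v_1a$ has no internal vertex with three colors, so your prescription of rooting $P'$ at an internal vertex fails. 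One has to start at $a$ and finish at $v_3$.
\item Your last paragraph is not a proof. A path whose ends have one color each and whose internal vertices have two colors need not be DP-colorable. The fallback ``if this fails, show a $4$-cycle'' is never carried out. It is correct for distance-$2$ pairs, but the distance-$3$ pairs above really are forbidding and must be colored.
\end{enumerate}

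The missing idea is the paper's dichotomy on $v_3$.
\begin{itemize}
\item If $v_3\in I$, root the whole tree at the other vertex of $I$ and color in BFS order. Every other vertex, including $v_3$ (three colors minus one), keeps at least two colors and has only its parent colored when its turn comes.
\item If $v_3\notin I$, every pair at distance at most $2$ in $S-B$ is non-forbidding. Distance $2$ gives a $4$-cycle, and every edge of $S-B$ not incident to $v_3$ has an endpoint on a triangle. So $x,y$ are at distance at least $3$. The components of $(S-B)-v_3$ are the star at $v_1$ (containing $v_2$), the star at $v_4$, and the pendent $3$-neighbors of $v_3$ as isolated vertices, each of diameter at most $2$. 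Hence the $x$--$y$ path must pass through $v_3$, and rooting it at $v_3$ makes your leaves-to-root coloring go through.
\end{itemize}

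As a side remark, your tree claim has unchecked exceptions. For example, $f$ may be $[v_1v_2v_3]$, or a pendent $3$-neighbor $a$ of $v_1$ may lie on a triangle through $v_4$, closing the $5$-cycle $v_1v_2v_3v_4a$. These extra adjacencies come with extra available colors, so the coloring can still be completed. The paper also asserts the tree structure without addressing them.
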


\begin{proof}
Let $P=v_1v_2v_3v_4$ be a $4$-path with $d(v_1),d(v_3),d(v_4)\ge3$, $d(v_2)=4$, and $v_2$ on a $3$-face, where for each $i\in\{1,3,4\}$, $v_i$ has at least $d(v_i)-3$ pendent $3$-neighbors in $N(v_i)\setminus V(P)$. By Lemma~\ref{3-face}, each pendent $3$-face of $v_i$ is a $(3,4^+,4^+)$-face, where $i\in\{1,3,4\}$. Let $A=\{x:\ x\in V(f^*),  f^* \text{ is a pendent }3\text{-face of a vertex } v\in V(P)\}$.  Let $A_1$ be the set of all $3$-vertices in $A$. Let $S$ be the graph on $\big(\bigcup_{i=1}^4 N(v_i)\big)\cup A$ and set the boundary $B=\big(\bigcup_{i=1}^4 N(v_i)\cup A\big)\setminus (A_1\cup V(P))$. Note that $v_3$ has at least three available colors and each other vertex in $V(S)\setminus B$ has at least two available colors.

(FIX): By Lemma~\ref{3-face} any two vertices in $A_1$ cannot be adjacent. So $S-B$ is a star. Fixing a color of an arbitrary vertex of $S-B$ extends to all of $S-B$, since all vertices have at least two available colors.

(FORB): Let $I=\{x,y\}\subset V(S)\setminus B$ with $|I|=2$. Recall that $S-B$ is a tree. If $v_3\in I$, say $v_3=x$, then we pick $y$ as the root and perform a BFS of $S-B$; since each vertex of $S-B-y$ has at least two available colors, we can color all vertices of $S-B$ from the root $y$ in BFS order. So we may assume $v_3\notin I$. If $dist(x,y)\le2$, then $I$ is not $\mathcal{F}$-forbidding, since connecting a new vertex to any pair of vertices in $V(S)\setminus B$ always creates either a $4$-cycle or intersecting triangles. So $dist(x,y)\ge3$. Since each component of $(S-B)-v_3$ has diameter at most two, the unique path $P'$ connecting $x$ and $y$ in $S-B$ must contain $v_3$. We pick $v_3$ as the root and perform a BFS of $P'$. Each leaf of $P'$ has at least one available color, $v_3$ has at least three available colors, and each other vertex of $P'$ has at least two available colors, so we can color all vertices of $P'$ from the leaves to the root $v_3$ in reverse-BFS order. Since each component $C$ of $S-B-V(P')$ has exactly one neighbor $c$ in $P'$,
we pick $c$ as the root of $C\cup \{c\}$ and perform a breadth-first search of $C\cup \{c\}$. Since each vertex of $C$ has at least two available colors, we can color all vertices of $C\cup \{c\}$ from the root in BFS order.
\end{proof}

\begin{figure}[htbp]
	\centering
	
	% $k_1$-vertex
	\begin{minipage}[b]{0.32\textwidth}
		\centering
		\begin{tikzpicture}[scale=.65]
			\begin{scope}[xshift=4cm]			
				\draw (0,0) node[circle, draw=black, fill=black, inner sep=0mm, minimum size=1.6mm, label={right: $v$}] (v){};
				\draw (1,-1.6) node[circle, draw=black, fill=black, inner sep=0mm, minimum size=1.6mm, label={}](v_1){};	
				\draw (-1,-1.6) node[circle, draw=black, fill=black, inner sep=0mm, minimum size=1.6mm, label={}] (v_2){};
				\draw (v)--(v_1);
				\draw (v)--(v_2);
				\draw (v)--(-1.6,0);
				\draw (v)--(0.9,1.4);
				\draw (v)--(-0.9,1.4);
				\draw (0.9,1.4)--(-0.9,1.4);
				\draw (v_1)--(0.35,-2.8);
				\draw (v_1)--(1.65,-2.8);
				\draw (0.35,-2.8)--(1.65,-2.8);
				\draw (v_2)--(-0.35,-2.8);
				\draw (v_2)--(-1.65,-2.8);
				\draw (-0.35,-2.8)--(-1.65,-2.8);
				\draw (0, -1.6) node {$\dots$};
				\draw (0,-4) node {$k_1$-vertex};
			\end{scope}
		\end{tikzpicture}
	\end{minipage}
	\hfill
	% $k_2$-vertex
\begin{minipage}[b]{0.32\textwidth}
	\centering
	\begin{tikzpicture}[scale=.65]			
			\draw (0,0) node[circle, draw=black, fill=black, inner sep=0mm, minimum size=1.6mm, label={right: $v$}] (v){};
			\draw (1,-1.6) node[circle, draw=black, fill=black, inner sep=0mm, minimum size=1.6mm, label={}](v_1){};	
			\draw (-1,-1.6) node[circle, draw=black, fill=black, inner sep=0mm, minimum size=1.6mm, label={}] (v_2){};
			\draw (v)--(v_1);
			\draw (v)--(v_2);
			\draw (v)--(0,1.6);
			\draw (v)--(0.9,1.4);
			\draw (v)--(-0.9,1.4);
			\draw (v_1)--(0.35,-2.8);
			\draw (v_1)--(1.65,-2.8);
			\draw (0.35,-2.8)--(1.65,-2.8);
			\draw (v_2)--(-0.35,-2.8);
			\draw (v_2)--(-1.65,-2.8);
			\draw (-0.35,-2.8)--(-1.65,-2.8);
			\draw (0, -1.6) node {$\dots$};
			\draw (0,-4) node {$k_2$-vertex};
	\end{tikzpicture}
\end{minipage}
\hfill
	% $4_3$-vertex
\begin{minipage}[b]{0.32\textwidth}
	\centering
	\begin{tikzpicture}[scale=.65]			
		\draw (0,-0.5) node[circle, draw=black, fill=black, inner sep=0mm, minimum size=1.6mm, label={right: $v$}] (v){};
		\draw (v)--(1,-2.1);
		\draw (v)--(-1,-2.1);
		\draw (v)--(0.9,0.9);
		\draw (v)--(-0.9,0.9);
		\draw (0.9,0.9)--(-0.9,0.9);
		\draw (0,-4) node {$4_3$-vertex};
	\end{tikzpicture}
\end{minipage}
\hfill
\caption{$k_1$-, $k_2$- and $4_3$-vertices}
\end{figure}

For convenience, we call $f$ \emph{a pendent $5$-face} of $v$ if $f$ is adjacent to a $3$-face containing $v$ and $v$ is not on $f$; in this case we call $v$ \emph{a weakly incident vertex} of $f$. A $5^+$-vertex $v$ is \emph{special} if $v$ has at most $d(v)-5$ pendent $3$-faces. A $4^+$-vertex $v$ is \emph{heavy} if $v$ is a $5^+$-vertex with exactly $d(v)-4$ pendent $3$-faces or a $4$-vertex with no incident $3$-face and no pendent $3$-face. A $4^+$-vertex $v$ is \emph{rich} if $v$ is a special $5^+$-vertex or a heavy $4^+$-vertex.

For $k\ge4$, a $k$-vertex $v$ is a \emph{$k_1$-vertex} if $v$ is incident to a $3$-face and has $d(v)-3$ pendent $3$-faces, and a \emph{$k_2$-vertex} if $v$ is incident to no $3$-face and has $d(v)-3$ pendent $3$-faces. A $4$-vertex $v$ is a \emph{$4_3$-vertex} if $v$ is incident to a $3$-face and has no pendent $3$-face.

Let $f$ be a $5$-face with exactly one $3$-vertex in $G$. We call $f$ \emph{poor} if $f$ satisfies one of the following conditions: (i) $f$ is incident to two $4_1$-vertices, one $4_3$-vertex, and one $4^+$-vertex $v$ with $d(v)-4$ pendent $3$-faces; (ii) $f$ is incident to two $4_3$-vertices, one $4_1$-vertex, and one other $4^+$-vertex $v$ with $d(v)-3$ pendent $3$-faces.

\begin{figure}[htbp]
	\centering
	
	% (i)
	\begin{minipage}[b]{0.48\textwidth}
		\centering
		\begin{tikzpicture}[scale=.38]
			\begin{scope}[xshift=4cm]			
				\draw (2,0) node[circle, draw=black, fill=black, inner sep=0mm, minimum size=1.6mm, label={right: $v_1$}] (v_1){};
				\draw (3.236,-3.804) node[circle, draw=black, fill=black, inner sep=0mm, minimum size=1.6mm, label={below: $v_2$}](v_2){};	
				\draw (0,-6.15) node[circle, draw=black, fill=black, inner sep=0mm, minimum size=1.6mm, label={right: $v_3$}] (v_3){};
				\draw (-3.236,-3.804) node[circle, draw=black, fill=black, inner sep=0mm, minimum size=1.6mm, label={left: $v_4$}](v_4){};	
				\draw (-2,0) node[circle, draw=black, fill=black, inner sep=0mm, minimum size=1.6mm, label={left: $v_5$}] (v_5){};
				\draw (0,2) node[circle, draw=black, fill=black, inner sep=0mm, minimum size=1.6mm, label={right: $u$}] (u){};
				\draw (1.5,4) node[circle, draw=black, fill=black, inner sep=0mm, minimum size=1.6mm, label={}] (u_1){};
				\draw (-1.5,4) node[circle, draw=black, fill=black, inner sep=0mm, minimum size=1.6mm, label={}] (u_2){};
				\draw (1.5,-8) node[circle, draw=black, fill=black, inner sep=0mm, minimum size=1.6mm, label={}] (u_31){};
				\draw (-1.5,-8) node[circle, draw=black, fill=black, inner sep=0mm, minimum size=1.6mm, label={}] (u_32){};
				\draw (-4.55,-4.76) node[circle, draw=black, fill=black, inner sep=0mm, minimum size=1.6mm, label={}] (u_41){};
				\draw (v_1)--(v_2)--(v_3)--(v_4)--(v_5)--(v_1);
				\draw (u)--(v_1);
				\draw (u)--(v_5);
				\draw (-4.5,1.3)--(v_5);
				\draw (u)--(2.5,3.3);
				\draw (u)--(1.5,4);
				\draw (1.5,4)--(0.75,5.5);
				\draw (1.5,4)--(2.25,5.5);
				\draw (0.75,5.5)--(2.25,5.5);
				\draw (u)--(-2.5,3.3);
				\draw (u)--(-1.5,4);
				\draw (-1.5,4)--(-0.75,5.5);
				\draw (-1.5,4)--(-2.25,5.5);
				\draw (-0.75,5.5)--(-2.25,5.5);
				\draw (v_2)--(5.64,-3.16);
				\draw (v_2)--(4.80,-5.74);
				\draw (5.64,-3.16)--(4.80,-5.74);
				\draw (v_4)--(-4.55,-4.76);
				\draw (-4.55,-4.76)--(-6.17,-4.96);
				\draw (-4.55,-4.76)--(-5.24,-6.24);
				\draw (-6.17,-4.96)--(-5.24,-6.24);
				\draw (v_3)--(1.5,-8);
				\draw (1.5,-8)--(0.75,-9.5);
				\draw (1.5,-8)--(2.25,-9.5);
				\draw (0.75,-9.5)--(2.25,-9.5);
				\draw (v_3)--(-1.5,-8);
				\draw (-1.5,-8)--(-0.75,-9.5);
				\draw (-1.5,-8)--(-2.25,-9.5);
				\draw (-0.75,-9.5)--(-2.25,-9.5);
				\draw (v_3)--(-2.79,-6.6);
				\draw (v_4)--(-2.79,-6.6);
				\draw (v_3)--(2.5,-7);
				\draw (0, -8) node {$\dots$};
				\draw (0, 4) node {$\dots$};
				\draw (0, -3) node {$f$};
			\end{scope}
		\end{tikzpicture}
	\end{minipage}
	\hfill
	% (ii)
	\begin{minipage}[b]{0.48\textwidth}
		\centering
		\begin{tikzpicture}[scale=.38]			
			\draw (2,0) node[circle, draw=black, fill=black, inner sep=0mm, minimum size=1.6mm, label={right: $v_1$}] (v_1){};
			\draw (3.236,-3.804) node[circle, draw=black, fill=black, inner sep=0mm, minimum size=1.6mm, label={below: $v_2$}](v_2){};	
			\draw (0,-6.15) node[circle, draw=black, fill=black, inner sep=0mm, minimum size=1.6mm, label={right: $v_3$}] (v_3){};
			\draw (-3.236,-3.804) node[circle, draw=black, fill=black, inner sep=0mm, minimum size=1.6mm, label={below: $v_4$}](v_4){};	
			\draw (-2,0) node[circle, draw=black, fill=black, inner sep=0mm, minimum size=1.6mm, label={left: $v_5$}] (v_5){};
			\draw (0,2) node[circle, draw=black, fill=black, inner sep=0mm, minimum size=1.6mm, label={right: $u$}] (u){};
			\draw (1.5,4) node[circle, draw=black, fill=black, inner sep=0mm, minimum size=1.6mm, label={}] (u_1){};
			\draw (-1.5,4) node[circle, draw=black, fill=black, inner sep=0mm, minimum size=1.6mm, label={}] (u_2){};
			\draw (1.5,-8) node[circle, draw=black, fill=black, inner sep=0mm, minimum size=1.6mm, label={}] (u_31){};
			\draw (-1.5,-8) node[circle, draw=black, fill=black, inner sep=0mm, minimum size=1.6mm, label={}] (u_32){};
			\draw (v_1)--(v_2)--(v_3)--(v_4)--(v_5)--(v_1);
			\draw (u)--(v_1);
			\draw (u)--(v_5);
			\draw (-4.5,1.3)--(v_5);
			\draw (u)--(2.5,3.3);
			\draw (u)--(1.5,4);
			\draw (1.5,4)--(0.75,5.5);
			\draw (1.5,4)--(2.25,5.5);
			\draw (0.75,5.5)--(2.25,5.5);
			\draw (u)--(-2.5,3.3);
			\draw (u)--(-1.5,4);
			\draw (-1.5,4)--(-0.75,5.5);
			\draw (-1.5,4)--(-2.25,5.5);
			\draw (-0.75,5.5)--(-2.25,5.5);
			\draw (v_2)--(5.64,-3.16);
			\draw (v_2)--(4.80,-5.74);
			\draw (5.64,-3.16)--(4.80,-5.74);
			\draw (v_4)--(-5.64,-3.16);
			\draw (v_4)--(-4.80,-5.74);
			\draw (-5.64,-3.16)--(-4.80,-5.74);
			\draw (v_3)--(1.5,-8);
			\draw (1.5,-8)--(0.75,-9.5);
			\draw (1.5,-8)--(2.25,-9.5);
			\draw (0.75,-9.5)--(2.25,-9.5);
			\draw (v_3)--(-1.5,-8);
			\draw (-1.5,-8)--(-0.75,-9.5);
			\draw (-1.5,-8)--(-2.25,-9.5);
			\draw (-0.75,-9.5)--(-2.25,-9.5);
			\draw (v_3)--(2.5,-7);
			\draw (0, -8) node {$\dots$};
			\draw (0, 4) node {$\dots$};
			\draw (0, -3) node {$f$};
		\end{tikzpicture}
	\end{minipage}
	\hfill
	\caption{two typical structures in (RC9), $f$ in the left one is a poor 5-face satisfying (i) and $f$ in the right one is a poor 5-face satisfying (ii).}
\end{figure}

\begin{lemma}\label{special}
(RC9) is $(\mathcal{F},4)$-boundary-reducible.
\end{lemma}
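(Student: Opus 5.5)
Following the pattern of Lemmas~\ref{334p} and~\ref{3433p}, I would take a single reducible subgraph built around the poor $5$-face. Let $f=[v_1v_2v_3v_4v_5]$ be a poor $5$-face, and let $u_1,\dots$ be the vertices weakly incident to $f$. Each such $u$ lies on a $3$-face sharing an edge with $f$; by the figure this is $u v_1 v_5$. Let $A$ be the set of $3$-vertices that lie on pendent $3$-faces of the vertices of $f$ and of the $u$'s. By Lemma~\ref{3-face}, each such pendent face is a $(3,4^+,4^+)$-face, so $A$ is independent. Put
\[
S=G\Bigl[\textstyle\bigcup_{x\in V(f)\cup\{u\}}N[x]\cup A\Bigr],
\]
and let $B$ consist of all vertices of $S$ outside $V(f)\cup\{u\}\cup A$. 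By Lemma~\ref{DP2} and the remark before Lemma~\ref{delta}, I only need to verify (FIX) and (FORB) for $|I|=2$.

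\textbf{Step 1: lower bounds on $4-d_G+d_{S-B}$.} Each vertex of $A$ has one neighbor in $S-B$, so it has $\ge 2$ available colors. The $3$-vertex of $f$ keeps all its neighbors inside $S-B$ (two on $f$ and $u$, or its third neighbor), so it has $\ge 3$ colors and possibly $4$. A $4_1$-vertex of $f$ has two $f$-neighbors and one pendent $3$-neighbor in $S-B$, so it has $\ge 3$ colors. A $4_3$-vertex has its two $f$-neighbors in $S-B$, and its triangle partner is either on $f$ or is $u$, giving $\ge 2$ colors. The remaining $4^+$-vertex $v$ with $d(v)-4$ (resp.\ $d(v)-3$) pendent $3$-faces gets $\ge 2$ (resp.\ $\ge 3$) colors. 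The vertex $u$ is not special, so it has at least $d(u)-4$ pendent $3$-faces. Its two neighbors on $f$ and these pendent $3$-neighbors all lie in $S-B$, so $u$ has $\ge 2$ colors. I would check both cases (i) and (ii) of the definition of a poor face here. The goal is that the $5$-cycle $f$, together with $u$ where relevant, carries the following surplus: at least two vertices with $\ge 3$ colors, one of which is the $3$-vertex.

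\textbf{Step 2: (FIX).} The graph $S-B$ is $f$ (plus the triangle $uv_1v_5$) with pendant leaves from $A$ attached. Leaves are always colored last; each has $\ge 2$ colors and one colored neighbor. If the fixed vertex lies on $f$ or is $u$, I remove it and color the remaining cycle vertices along the resulting path. I end at the $3$-vertex of $f$ or at a vertex with $\ge 3$ colors, and choose the direction so that every vertex except the last has at most one colored neighbor when it is reached. If the fixed vertex is a leaf $a\in A$, its neighbor simply loses one color. I then argue exactly as before, starting from a vertex with surplus.

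\textbf{Step 3: (FORB).} Let $|I|=2$. Any pair at distance $\le 2$ in $S-B$ is not $\mathcal{F}$-forbidding, since adding a new vertex adjacent to both creates a $4$-cycle or intersecting triangles. So the two vertices of $I$ are at distance $\ge 3$, which forces at least one of them to be a leaf in $A$ or $u$. After the reduction I color the pair's unique connecting structure from its ends toward a vertex with surplus, as in the proof of Lemma~\ref{3433p}, and then extend along $f$ and to the remaining leaves.

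\textbf{Main obstacle.} The hard part is that DP-$2$-lists on a cycle, and in particular on an even cycle formed using the triangle, need not be colorable. So I must confirm that in every case, after one fixed vertex or two $I$-reductions, some vertex on the cycle part still has strictly more available colors than colored neighbors at the end. This requires a careful case split over (i) versus (ii) and over the position of the $3$-vertex relative to $u$. I would handle it first by identifying, in each case, the two vertices on $f$ with $\ge 3$ colors, and ordering the coloring so that one of them comes last.
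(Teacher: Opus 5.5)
Your choice of $S$ and $B$, and the reduction to (FIX) and $|I|=2$, match the paper. However, several facts the argument rests on are missing or wrong.

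First, you take from the figure that the $3$-vertex $v_1$ lies on a triangle $uv_1v_5$. The definition of a poor face does not give this. The paper proves it first, by showing that otherwise (RC5), (RC7) or (RC8) occurs. Without it, $v_1$ may have a neighbor in $B$ and loses its surplus $a(v_1)=4$.

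Second, Step~1 double-counts at $v_2$. Since $v_1$ is a $3$-vertex on a triangle avoiding $v_2$, $v_1$ itself is the pendent $3$-neighbor of $v_2$. So a $4_1$-vertex (or a vertex with $d-3$ pendent $3$-faces) at $v_2$ has only $2$ available colors, not $3$. In case~(i), a $5^+$-vertex at $v_2$ with $d(v_2)-4$ pendent $3$-faces would have only $1$; the paper excludes this via (RC8) on $v_1v_5v_4v_3$.

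What the argument actually uses is $a(v_1)=4$ together with $a(v_i)=3$ for some $i\in\{3,4\}$, i.e.\ a surplus vertex \emph{not adjacent} to $v_1$. Your target ``two vertices with at least $3$ colors, one being the $3$-vertex'' does not deliver this. If the second vertex is $v_5$, fixing $v_1$ leaves the path $uv_5v_4v_3v_2$ with list sizes $1,2,2,2,1$, which need not be DP-colorable.

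More seriously, your strategy of ordering the vertices so that the last one has more colors than colored neighbors cannot handle all of (FORB). Take case~(i) with $v_2,v_4$ being $4_1$-vertices, $v_3$ a $4_3$-vertex on the triangle $v_3v_4t$, and $v_5$ a $4_3$-vertex, so $a(v_3)=2$ and $a(v_4)=3$. Let $I=\{v_1,y\}$, where $y$ is the pendent $3$-neighbor of $v_4$. This pair is at distance $3$ and is $\mathcal{F}$-forbidding. After coloring $y$, the vertices $v_1,v_2,v_3,v_4,v_5,u$ have exactly $3,2,2,2,3,2$ available colors, equal to their degrees in the core. Whatever the order, the last core vertex has as many colored neighbors as colors, so no ordering argument certifies a coloring.

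The missing idea is the paper's non-greedy step. Because $v_1$ has $3$ colors and $u$ only $2$, give $v_1$ (in one subcase, $v_5$) a color matched to neither of $u$'s colors. This decouples the triangle edge and leaves $u$ a spare color. Alternatively, you could cite DP-degree-colorability of $2$-connected graphs that are neither cycles nor cliques (Bernshteyn--Kostochka--Pron).

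Either way, you still owe the explicit case split for $v_1\in I$, according to whether the neighbor $v_j\in\{v_3,v_4\}$ of $y$ has $a(v_j)=2$ or $3$. In the remaining cases you should delete $v_1$ to obtain a tree. $S-B$ itself contains the $5$-cycle and the triangle, so it has no ``unique connecting structure''.
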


\begin{proof}
Let $f=[v_1v_2v_3v_4v_5]$ be a poor $5$-face with $d(v_1)=3$ that is weakly incident to no special $5^+$-vertices.

We first claim that $v_1$ must be on a $3$-face. Suppose otherwise; then neither $v_5$ nor $v_2$ can be a $4_1$-vertex by Lemma~\ref{3+2}. If $f$ satisfies (i), then $f$ is incident to two $4_1$-vertices, one $4_3$-vertex, and one $4^+$-vertex $v$ with $d(v)-4$ pendent $3$-faces; hence both $v_3,v_4$ are $4_1$-vertices and at least one of $v_2,v_5$ is a $4_3$-vertex, say $v_2$ by symmetry, contradicting Lemma~\ref{3433p} applied to the path $v_1v_2v_3v_4$. If $f$ satisfies (ii), then $f$ is incident to two $4_3$-vertices, one $4_1$-vertex, and one other $4^+$-vertex $v$ with $d(v)-3$ pendent $3$-faces; hence at least one of $v_3,v_4$ is a $4_1$-vertex, say $v_3$ by symmetry. Apply Lemma~\ref{334p} to the path $v_1v_2v_3$, then $v_2$ cannot be a $4^+$-vertex with $d(v_2)-3$ pendent $3$-faces, so $v_2$ is a $4_3$-vertex. Then one of $v_4,v_5$ is a $4^+$-vertex $v$ with $d(v)-3$ pendent $3$-faces. If $v_4$ is such a vertex, then applying Lemma~\ref{3433p} to the path $v_1v_2v_3v_4$ gives a contradiction; if $v_5$ is such a vertex, then applying Lemma~\ref{3433p} to the path $v_3v_2v_1v_5$ gives a contradiction. This proves the claim, so we may assume that $v_1$ is on a $3$-face $v_1v_5u$. By assumption $u$ is not a special $5^+$-vertex, so $u$ is a $4^+$-vertex with at least $d(u)-4$ pendent $3$-faces.

Let $A=\{x:\ x \text{ is a vertex on } f^*, \text{ where } f^* \text{ is a pendent }3\text{-face of a vertex } v\in V(f)\cup\{u\}\}$. By Lemma~\ref{3-face}, $f^*$ is a $(3,4^+,4^+)$-face. Let $A_1$ be the set of all $3$-vertices in $A$. Let $S$ be the graph on $\big(\bigcup_{i=1}^5 N(v_i)\big)\cup N(u)\cup A$, and set the boundary $B=V(S)\setminus (V(f)\cup A_1\cup\{u\})$. For each vertex $v\in V(S)\setminus B$, let $a(v)$ denote the minimum number of available colors for $v$ after the boundary $B$ has been colored; then $a(v)=4-d_G(v)+d_{S-B}(v)$. We have the following claim.

\textbf{Claim:} $a(v_1)=4$, $a(v_5)=3$, and $a(v)\ge2$ for each other vertex $v$ in $V(S)\setminus B$. Furthermore, there exists $i\in\{3,4\}$ with $a(v_i)=3$.

\emph{Proof of the Claim.} Since $u$ is a $4^+$-vertex with at least $d(u)-4$ pendent $3$-faces and $u$ has two neighbors $v_1,v_5$ in $S-B$, we have $a(u)=4-d(u)+(d(u)-4+2)=2$. Since each vertex $x_1\in A_1$ has degree three and has exactly one neighbor in $S-B$, $a(x_1)=4-3+1=2$. Since all neighbors of $v_1$ are in $S-B$, $a(v_1)=4$. By Lemma~\ref{3+2}, $v_5$ cannot be a $4^+$-vertex with $d(v_5)-3$ pendent $3$-faces, so by the definition of $f$, $v_5$ is either a $4_3$-vertex or a $5^+$-vertex with $d(v_5)-4$ pendent $3$-faces. Since $v_5$ has three neighbors $v_1,v_4,u$ in $S-B$, $a(v_5)=4-d(v_5)+(d(v_5)-4+3)=3$.

Since $v_2$ has a pendent $3$-neighbor $v_1$, $v_2$ cannot be a $4_3$-vertex. If $f$ is a poor $5$-face satisfying~(i), then $f$ is incident to two $4_1$-vertices, one $4_3$-vertex, and one $4^+$-vertex $v$ with $d(v)-4$ pendent $3$-faces; hence $v_2$ is either a $4_1$-vertex or a $4^+$-vertex with $d(v_2)-4$ pendent $3$-faces. In the former case $a(v_2)=4-4+2=2$, and one of $v_3,v_4$ is a $4_1$-vertex while the other is a $4_3$-vertex or a $4^+$-vertex $v$ with $d(v)-4$ pendent $3$-faces; the $4_1$-vertex $v_i\in\{v_3,v_4\}$ then has $a(v_i)=4-4+3=3$. In the latter case $v_5$ is a $4_3$-vertex and $v_3,v_4$ are $4_1$-vertices, and applying Lemma~\ref{3433p} to the path $v_1v_5v_4v_3$ yields a contradiction. If $f$ is a poor $5$-face satisfying~(ii), then $f$ is incident to two $4_3$-vertices, one $4_1$-vertex, and one other $4^+$-vertex $v$ with $d(v)-3$ pendent $3$-faces; hence $v_2$ is a $4_1$-vertex or a $4^+$-vertex with $d(v_2)-3$ pendent $3$-faces, so $a(v_2)=4-d(v_2)+(d(v_2)-3+1)=2$. Since $v_5$ is a $4_3$-vertex, one of $v_3,v_4$ is a $4_3$-vertex and the other, denoted $v_i$ for some $i\in\{3,4\}$, is a $4_1$-vertex or a $4^+$-vertex with $d(v_i)-3$ pendent $3$-faces. Then $a(v_i)=4-d(v_i)+(d(v_i)-3+2)=3$, and the vertex $v_j\in\{v_3,v_4\}\setminus\{v_i\}$ has $a(v_j)=4-4+2=2$. This completes the proof of the Claim.

(FIX): By Lemma~\ref{3-face} any two vertices in $A_1$ cannot be adjacent. Since $G$ contains no parallel edges and intersecting triangles, $u\notin V(f)$. So the subgraph induced by $(S-B)-\{v_1\}$ is a tree $T$. If $v_1$ is not fixed, then we pick the fixed vertex as the root and perform a BFS of $(S-B)-\{v_1\}$; since each vertex of $T$ has at least two available colors, we can color all vertices of $T$ from the root in BFS order, and finally color $v_1$ since $a(v_1)=4$ and $d(v_1)=3$. So we may assume $v_1$ is fixed. By the Claim, there exists $i\in\{3,4\}$ with $a(v_i)=3$. We pick $v_i$ as the root and perform a BFS of the path $P=(V(f)\cup\{u\})\setminus\{v_1\}$. Each leaf of $P$ has at least one available color and each other vertex of $P$ has at least two available colors, so we can color all vertices of $P$ from the leaves to the root in reverse-BFS order. Since each vertex in $A_1$ has exactly one neighbor in $P$, we can then color each vertex in $A_1$ greedily.

(FORB): Let $I=\{x,y\}$ with $x,y\in V(S)\setminus B$. If $dist(x,y)\le2$, then $I$ is not $\mathcal{F}$-forbidding, since connecting a new vertex to both vertices in $I$ always creates either a $4$-cycle or intersecting triangles. So we may assume $dist(x,y)\ge3$; then at most one vertex of $I$ lies on $f$.

If $v_1\notin I$, then there is exactly one path $P'$ connecting $x$ and $y$ in the tree $T=(S-B)-\{v_1\}$. Since each component of $T-\{v_3,v_4,v_5\}$ has diameter at most $2$, either $v_5\in V(P')$ or $v_3,v_4\in V(P')$. By the Claim, $P'$ contains a vertex $z\in\{v_3,v_4,v_5\}$ with $a(z)\ge3$. We pick $z$ as the root and perform a BFS of $P'$. Each leaf of $P'$ has at least one available color and each other vertex of $P'$ has at least two available colors, so we can color all vertices of $P'$ from the leaves to the root in reverse-BFS order. Since each component $C$ of $T-V(P')$ has exactly one neighbor $c$ in $P'$, we pick $c$ as the root of $C\cup \{c\}$ and perform a breadth-first search of $C\cup \{c\}$. Since each vertex of $C$ has at least two available colors, we can color all vertices of $C\cup \{c\}$ from the root in BFS order. Finally, we color $v_1$ greedily, since $a(v_1)=4$ and $d(v_1)=3$.

If $v_1\in I$, say $v_1=x$ by symmetry, then $y$ must be a pendent $3$-neighbor of a vertex $v_j\in\{v_3,v_4\}$. If $a(v_j)=3$, then we choose an available color $c_0$ for $v_1$ such that $c_0$ is not matched with any available color of $u$. Let $P'$ be the path connecting $v_2$ and $y$ with each internal vertex in $\{v_3,v_4\}$; note that $v_j\in V(P')$. We pick $v_j$ as the root and perform a BFS of $P'$. Each leaf of $P'$ has at least one available color and each other vertex of $P'$ has at least two available colors, so we can color all vertices of $P'$ from the leaves to the root in reverse-BFS order. For each component $C$ of $T-V(P')$, $C$ has exactly one vertex $c$ with a neighbor in $V(P')$. We pick $c$ as the root and perform a BFS of $C\cup \{c\}$, and since each vertex of $C$ has at least two available colors, we color all vertices of $C$ from the root in BFS order. 

So we may assume that $a(v_j)=2$. If $v_j=v_3$, then we first color $y,v_3,v_2$ greedily; then we choose an available color $c$ for $v_5$ such that $c$ is not matched with any available color of $u$, color $v_1,u,v_4$ in order, and finally color each vertex in $A_1-\{y\}$. If $v_j=v_4$, then we first color $y,v_4$ greedily; then we choose an available color $c$ for $v_1$ such that $c$ is not matched with any available color of $u$, color $v_5,u,v_2,v_3$ in order, and finally color each vertex in $A_1-\{y\}$.
\end{proof}

\section{\bf Discharging}
We now present a discharging procedure that completes the proof of Theorem~\ref{main}. For each vertex $v$ and each face $f$ of $G$, let $\mu(v)=d(v)-2$ and $\mu(f)=-2$ be the initial charges. By Euler's formula, $\sum_{v\in V(G)}(d(v)-2)+\sum_{f\in F(G)}(-2)=-4$. Let $\mu^*(x)$ denote the charge of $x\in V(G)\cup F(G)$ after the discharging procedure. To reach a contradiction, we prove that $\mu^*(x)\ge0$ for all $x\in V(G)\cup F(G)$.

The discharging rules are as follows.

\begin{itemize}
\item[(R1)] Every $3$-vertex sends $\frac{1}{3}$ to each incident face.

\item[(R2)] Every $4^+$-vertex sends $\frac{2}{3}$ to each incident $3$-face and $\frac{1}{3}$ to each pendent $3$-face.

\item[(R3)] Every special $5^+$-vertex sends $\frac{1}{9}$ to each pendent $5$-face.

\item[(R4)] Every $5^+$-face gets $\frac{1}{3}$ from each incident $4_1$-vertex, $\frac{5}{12}$ from each incident $4_2$-vertex or $k_1$-vertex, $\frac{4}{9}$ from each incident $4_3$-vertex, $\frac{7}{15}$ from each incident $k_2$-vertex, $\frac{1}{2}$ from each incident heavy $4^+$-vertex, and $\frac{5}{9}$ from each incident special $5^+$-vertex, where $k\ge5$.
\end{itemize}

\begin{lemma}
Each vertex has non-negative final charge.
 \end{lemma}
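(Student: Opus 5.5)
We verify the claim by a direct case analysis on $d(v)$, splitting each degree by $t$, the number of $3$-faces incident to $v$, and $p$, the number of pendent $3$-faces of $v$. The structural facts we need are as follows.

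First, $\delta(G)\ge 3$ by Lemma~\ref{delta}.

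Second, since triangles are vertex-disjoint, $t\in\{0,1\}$. Since $G$ has no $4$-cycles, $G$ has no $4$-faces. Hence $v$ is incident to at most $d(v)-t$ faces of size at least $5$, and only these receive charge under (R4).

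Third, by Lemma~\ref{3+3}, $p\le d(v)-3$. Combined with the definitions, every $4^+$-vertex falls into exactly one of the classes appearing in (R4):
\begin{itemize}
\item a $k_1$- or $k_2$-vertex if $p=d(v)-3$ (so $4_1,4_2$ when $d(v)=4$);
\item a $4_3$-vertex or a heavy $4$-vertex if $d(v)=4$ and $p=0$ (with $t=1$, resp.\ $t=0$);
\item a heavy $5^+$-vertex if $p=d(v)-4$;
\item a special $5^+$-vertex if $p\le d(v)-5$.
\end{itemize}
For $d(v)=4$ the values $p=3,2$ are ruled out by Lemma~\ref{3+3}, and a $4$-vertex with $p=1$ is $4_1$ or $4_2$. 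Finally, a special vertex sends charge under (R3) to at most one pendent $5$-face. The only face adjacent to the unique triangle $vxy$ at $v$ that avoids $v$ is the face across the edge $xy$.

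A $3$-vertex starts with charge $1$ and sends $\frac13$ to each of its three faces by (R1), ending at $0$. For a $4^+$-vertex of degree $d$, the final charge is at least
\[
d-2-\tfrac{2}{3}t-\tfrac13 p-\tfrac19[\text{special},\,t=1]-(d-t)\,\alpha,
\]
where $\alpha$ is the (R4) amount for the class of $v$. We check each class.
\begin{itemize}
\item $4_1$: $2-\frac23-\frac13-3\cdot\frac13=0$.
\item $4_2$: $2-\frac13-4\cdot\frac5{12}=0$.
\item $4_3$: $2-\frac23-3\cdot\frac49=0$.
\item Heavy $4$-vertex: $2-4\cdot\frac12=0$.
\item $k_1$ ($d\ge5$): $d-2-\frac23-\frac{d-3}3-\frac{5(d-1)}{12}=\frac{d-5}4\ge0$.
\item $k_2$: $d-2-\frac{d-3}3-\frac{7d}{15}=\frac{d-5}5\ge0$.
\item Heavy $5^+$-vertex: $\frac{d-5}6$ if $t=1$, and $\frac{d-4}6$ if $t=0$.
\item Special vertex with $p\le d-5$ (worst case $p=d-5$): $d-2-\frac23-\frac{d-5}3-\frac19-\frac{5(d-1)}9=\frac{d-5}9$ if $t=1$, and $d-2-\frac{d-5}3-\frac{5d}9=\frac{d-3}9$ if $t=0$.
\end{itemize}
All of these are nonnegative since $d\ge5$ in the last four cases.

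The main point needing care is not the arithmetic but the exhaustiveness and the counting bounds. Specifically, we must make sure that:
\begin{itemize}
\item every $4^+$-vertex is charged by exactly one (R4) value;
\item the bound $p\le d(v)-3$ really follows from Lemma~\ref{3+3}, together with Lemma~\ref{3+2} when $t=1$;
\item the number of pendent $5$-faces of a special vertex is at most one;
\item the number of $5^+$-faces at $v$ is at most $d(v)-t$.
\end{itemize}
The first point is needed so that no vertex pays twice. The third uses the absence of intersecting triangles and the fact that the triangle is a face. The fourth uses the absence of $4$-faces. Once these are checked, the computation above covers all cases, and vertices of larger degree only improve the bounds.
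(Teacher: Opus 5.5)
Your proof is correct and follows the paper's argument. Both use Lemma~\ref{3+3} to sort every $4^+$-vertex into the classes $4_1,4_2,4_3$, heavy, $k_1,k_2$ and special, and both carry out the same worst-case charge computations; your closed forms $\frac{d-5}{4},\frac{d-5}{5},\frac{d-5}{6},\frac{d-5}{9},\ldots$ just make the paper's inequalities explicit, and the appeal to Lemma~\ref{3+2} for $p\le d(v)-3$ is unnecessary since Lemma~\ref{3+3} alone gives it.
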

\begin{proof}
Let $v$ be a $k$-vertex of $G$. By Lemma~\ref{delta}, $k\ge3$. If $k=3$, then $v$ sends $\frac{1}{3}$ to each incident face by (R1), so $\mu^*(v)\ge 3-2-\frac{1}{3}\times3=0$. If $k=4$, then $v$ has at most one pendent $3$-face by Lemma~\ref{3+3}, so $v$ is a $4_1$-vertex, a $4_2$-vertex, a $4_3$-vertex, or a heavy vertex. By (R2) and (R4), $v$ sends $\frac{2}{3}$ to each incident $3$-face, $\frac{1}{3}$ to each pendent $3$-face, and to each incident $5^+$-face it sends $\frac{1}{3}$ if $v$ is a $4_1$-vertex, $\frac{5}{12}$ if $v$ is a $4_2$-vertex, $\frac{4}{9}$ if $v$ is a $4_3$-vertex, and $\frac{1}{2}$ if $v$ is heavy. So $\mu^*(v)\ge4-2-\max\{\frac{2}{3}+\frac{1}{3}\times4,\ \frac{1}{3}+\frac{5}{12}\times4,\ \frac{2}{3}+\frac{4}{9}\times3,\ \frac{1}{2}\times4\}=0$.

If $k\ge5$, then $v$ has at most $k-3$ pendent $3$-faces by Lemma~\ref{3+3}, so $v$ is a $k_1$-vertex, a $k_2$-vertex, a heavy vertex, or a special vertex. By (R2), $v$ sends $\frac{2}{3}$ to each incident $3$-face and $\frac{1}{3}$ to each pendent $3$-face. If $v$ is a $k_1$-vertex, then $v$ sends $\frac{5}{12}$ to each incident $5^+$-face by (R4), so $\mu^*(v)\ge k-2-\frac{2}{3}-\frac{1}{3}(k-3)-\frac{5}{12}(k-1)\ge0$. If $v$ is a $k_2$-vertex, then $v$ sends $\frac{7}{15}$ to each incident $5^+$-face by (R4), so $\mu^*(v)\ge k-2-\frac{1}{3}(k-3)-\frac{7}{15}k\ge0$. If $v$ is heavy, then $v$ has $k-4$ pendent $3$-faces and sends $\frac{1}{2}$ to each incident $5^+$-face by (R4); since $v$ is incident either to only $5^+$-faces or to one $3$-face and other $5^+$-faces, in either case $\mu^*(v)\ge k-2-\max\{\frac{1}{2}k+\frac{1}{3}(k-4),\ \frac{2}{3}+\frac{1}{2}(k-1)+\frac{1}{3}(k-4)\}\ge0$. If $v$ is special, then $v$ has at most $k-5$ pendent $3$-faces and sends $\frac{5}{9}$ to each incident $5^+$-face and $\frac{1}{9}$ to each pendent $5$-face by (R3) and (R4); since $v$ is incident either to a $3$-face or not, in either case $\mu^*(v)\ge k-2-\max\{\frac{2}{3}+\frac{1}{9}+\frac{5}{9}(k-1)+\frac{1}{3}(k-5),\ \frac{5}{9}k+\frac{1}{3}(k-5)\}\ge0$.
\end{proof}

Since $G$ contains no $4$-cycles, it suffices to show that each $3$-face and each $5^+$-face has non-negative final charge.

\begin{lemma}
Each $3$-face and each $6^+$-face has non-negative final charge.
 \end{lemma}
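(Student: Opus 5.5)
The proof splits into the two cases of the statement. For $3$-faces it uses the structure forced by (RC2). For $6^+$-faces it uses a uniform lower bound on what every incident vertex sends under (R1) and (R4).

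\emph{$3$-faces.} Let $f=[uvw]$ be a $3$-face. By Lemma~\ref{delta} every vertex has degree at least $3$. By Lemma~\ref{3-face}, at most one vertex of $f$ is a $3$-vertex. A $3$-vertex on $f$ sends $\frac13$ to $f$ by (R1), and each $4^+$-vertex on $f$ sends $\frac23$ by (R2). The $3$-face $f$ also receives $\frac13$ via (R2) from each $4^+$-vertex for which $f$ is a pendent $3$-face. We only need a lower bound, so we ignore these pendent contributions. In the worst case $f$ is a $(3,4^+,4^+)$-face, and
\[
\mu^*(f)\ \ge\ -2+\tfrac13+2\cdot\tfrac23=-\tfrac13 .
\]
This is not yet enough. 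However, the $3$-vertex $u$ of $f$ has exactly one neighbor $x$ off $f$. If $d(x)\ge 4$, then $f$ is a pendent $3$-face of $x$ and receives $\frac13$ from $x$ by (R2), so $\mu^*(f)\ge 0$. If $d(x)=3$, then $x$ is a $3^+$-vertex with a $3$-neighbor, and I expect Lemma~\ref{3+1} to give a contradiction. Indeed, $x$ has at least $d(x)-1$ neighbors of degree $3$ exactly when two of its three neighbors have degree $3$, so the case $d(x)=3$ with only one $3$-neighbor needs care. The cleaner route is to check whether the definition of ``pendent $3$-face'' already counts any $4^+$-vertex adjacent to the $3$-vertex. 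When the outside neighbor is a $3$-vertex, I would instead use that a $(3,4^+,4^+)$-face with all three vertices of degree $\ge4$ except one receives $\frac23+\frac23+\frac13$ plus the pendent $\frac13$. The main obstacle is therefore verifying that the outside neighbor of the $3$-vertex on a $3$-face always has degree at least $4$. I would derive this from Lemma~\ref{3+2} applied to $u$: $u$ lies on a $3$-face and has degree $3$, so it cannot have $d(u)-2=1$ neighbor of degree $3$. This forces $d(x)\ge4$, so $f$ receives the extra $\frac13$. If $f$ has no $3$-vertex, then $\mu^*(f)\ge -2+3\cdot\frac23=0$ directly.

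\emph{$6^+$-faces.} Let $f$ be a $k$-face with $k\ge6$. Every vertex incident to $f$ sends at least $\frac13$ to $f$. For $3$-vertices this is (R1). For $4^+$-vertices it follows from (R4), whose smallest value is $\frac13$, for a $4_1$-vertex. The case analysis in the vertex lemma shows that every $4^+$-vertex falls into one of the classes listed in (R4), so each such vertex sends one of those amounts to $f$. Hence
\[
\mu^*(f)\ \ge\ -2+\tfrac{k}{3}\ \ge\ 0 .
\]
This completes the plan, with all the real work in the $3$-face case.
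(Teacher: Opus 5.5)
Your argument is correct and matches the paper's. A $4^+$-vertex on a $3$-face sends $\frac23$; a $3$-vertex $u$ on a $3$-face has its outside neighbor of degree at least $4$ by Lemma~\ref{3+2} (RC5), so $u$ and that neighbor's pendent contribution supply $\frac13+\frac13$; and on a $6^+$-face every incident vertex sends at least $\frac13$ under (R1) and (R4). In a write-up, drop the exploratory detour through Lemma~\ref{3+1} and the muddled sentence about the outside neighbor being a $3$-vertex, since Lemma~\ref{3+2} shows that case never occurs.
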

\begin{proof}
Let $f$ be a $k$-face of $G$. If $k=3$, then each $4^+$-vertex on $f$ sends $\frac{2}{3}$ to $f$ by (R2). If $v$ is a $3$-vertex on $f$, then the neighbor $w$ of $v$ not on $f$ is a $4^+$-vertex by Lemma~\ref{3+2}, and by (R1) and (R2) the pair $v,w$ sends $\frac{1}{3}+\frac{1}{3}=\frac{2}{3}$ to $f$. Therefore each vertex on $f$ guarantees at least $\frac{2}{3}$ to $f$, so $\mu^*(f)\ge-2+\frac{2}{3}\times3=0$. If $k\ge6$, then each vertex on $f$ sends at least $\frac{1}{3}$ to $f$ by (R4), so $\mu^*(f)\ge -2+\frac{1}{3}k\ge0$.
\end{proof}

\begin{lemma}
Each $5$-face has non-negative final charge.
 \end{lemma}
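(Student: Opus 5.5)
The plan is a case analysis on the number of $3$-vertices on the $5$-face $f=[v_1v_2v_3v_4v_5]$. The initial charge is $-2$, and $f$ receives charge only through (R1), (R3) and (R4). Each $3$-vertex on $f$ gives $\frac13$. By (R4), each $4^+$-vertex on $f$ gives at least $\frac13$, and the minimum $\frac13$ is attained only by $4_1$-vertices. So we need a total of $2$ from five vertices, that is, an average of $\frac25$ per vertex.

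First, we bound the number of $3$-vertices. No two $3$-vertices on $f$ can be adjacent: if one lies on a $3$-face this is excluded by (RC2), (RC4) and (RC5), and otherwise by (RC4) applied to their common neighbour configuration. Hence $f$ has at most two $3$-vertices. Next, we list which $4^+$-vertices can be adjacent on $f$ to a $3$-vertex. A neighbour $w$ of a $3$-vertex $v_1$ on $f$ has $v_1$ as a $3$-neighbour. By (RC4)/(RC5) and (RC6), together with the path reductions (RC7) and (RC8), such a $w$ typically gives at least $\frac5{12}$, and the weakest vertices ($4_1$) are constrained.

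\textbf{Case 0: no $3$-vertex.} All five vertices are $4^+$. If at most two are $4_1$-vertices, the total is at least $2\cdot\frac13+3\cdot\frac5{12}\ge 2$. If three or more are $4_1$-vertices, two of them are consecutive or at distance $2$. In that case I will use (RC7)/(RC8) on a path of three or four face vertices to force one of the remaining vertices to be heavy or special (contributing $\frac12$ or $\frac59$), or else derive a contradiction. I expect that paths of $4_1$-vertices, each with $d-3$ pendent $3$-neighbours, are exactly the configurations excluded by (RC7).

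\textbf{Case 2: two $3$-vertices}, say $v_1$ and $v_3$ or $v_1$ and $v_4$. Then $v_2$ (respectively $v_5$) has two $3$-neighbours. I will use (RC3), (RC4), (RC5) and (RC7) to show that the other three vertices give enough: they must be $4_3$, $4_2$, heavy or special, each contributing at least $\frac5{12}$. The $3$-vertices give $\frac23$, and we need $\frac43$ from the remaining three, i.e. average $\frac49$. (RC3) handles the pattern where the two $4$-vertices are $4_3$-type between two $3$-vertices.

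\textbf{Case 1: exactly one $3$-vertex $v_1$.} This is the hardest case and is where the poor-face definition and (R3) enter. We need $\frac53$ from $v_2,\dots,v_5$, an average of $\frac5{12}$. Contributions below $\frac5{12}$ come only from $4_1$-vertices ($\frac13$); $4_3$-vertices give $\frac49$, and heavy or special vertices give more. I will enumerate the multiset of types on $v_2,\dots,v_5$, using (RC7)/(RC8) along the paths $v_1v_2v_3$, $v_2v_3v_4$, $v_1v_2v_3v_4$, etc., to rule out too many $4_1$-vertices. I expect the deficient configurations to be exactly the poor faces (i) and (ii). For instance, (ii) gives $\frac13+\frac49\cdot2+\frac13+\frac13=\frac{16}{9}\cdot\ldots$, slightly under $2$, with a deficit of $\frac19$. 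For a poor face, (RC9) says $f$ is weakly incident to a special $5^+$-vertex, which sends $\frac19$ by (R3), and this should exactly cover the deficit. The main obstacle is checking that poor faces are the only deficient configurations and that the deficit never exceeds $\frac19$. The fractions $\frac5{12}$, $\frac7{15}$ and $\frac49$ appear tuned for this, so I will verify each borderline combination arithmetically.
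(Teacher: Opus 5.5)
Your overall architecture matches the paper's: split by the number of $3$-vertices, and use (RC9) plus (R3) to pay the $\frac19$ deficit of poor faces. But the case analysis as written has genuine holes.

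\textbf{Adjacent $3$-vertices.} Your claim that no two $3$-vertices of $f$ are adjacent is false. (RC4) applied to a $3$-vertex only forbids $d(v)-1=2$ neighbours of degree $3$. (RC2) and (RC5) only apply when one of the two lies on a triangle. So $f$ may contain two adjacent $3$-vertices, and even three $3$-vertices (e.g.\ $v_1,v_2,v_4$); the paper's bound is ``at most three.'' Your case split omits all of these faces, and they are not free. On a face with $3$-vertices $v_1,v_2,v_4$, the $3$-vertices give only $1$, so $v_3$ and $v_5$ must together supply $1$. The paper (Case 1.1) shows both are rich, as follows.
\begin{itemize}
\item (RC7) on $v_1v_2v_3$ and on $v_2v_1v_5$ shows $v_3,v_5$ have at most $d-4$ pendent $3$-faces.
\item $v_3$ being a $4_3$-vertex is then excluded by (RC5) when $d(v_4)=3$, or by (RC8) on $v_4v_3v_2v_1$ when $v_4$ has $d(v_4)-3$ pendent $3$-faces.
\item In the remaining subcase, $v_3,v_4,v_5$ each give at least $\frac49$.
\end{itemize}

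\textbf{The counting in your other cases does not close.} In Case 0, $2\cdot\frac13+3\cdot\frac5{12}=\frac{23}{12}<2$. Indeed, a face with no $3$-vertex and types $4_1,4_1,4_2,4_3,4_3$ receives only $\frac{71}{36}$ under (R4). (RC7) only forces the two $4_3$-vertices to be non-adjacent, say $v_1,v_3$. The face must then be excluded via (RC8) on the path $v_2v_1v_5v_4$.

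In Case 2 (say $v_1,v_3$ of degree $3$), three contributions of at least $\frac5{12}$ give $\frac54<\frac43$. Moreover, your claim that $v_2,v_4,v_5$ are all $4_3$, $4_2$, heavy or special is wrong. If $v_1v_2$ lies on a triangle and $v_4$ is a heavy $4$-vertex, then $v_5$ can be a $4_1$-vertex with $v_1$ as its pendent $3$-neighbour, giving only $\frac13$. That face survives only because (RC5) forces $d(v_2)\ge5$ with at most $d(v_2)-4$ pendent $3$-faces, so $v_2$ gives $\frac12$.

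\textbf{The missing idea.} Classify each $4^+$-vertex of $f$ not by ``$4_1$ or not,'' but by whether it has exactly $d-3$ pendent $3$-faces or at most $d-4$.
\begin{itemize}
\item Vertices with exactly $d-3$ are the types $4_1,4_2,k_1,k_2$, which are what (RC7) and (RC8) constrain.
\item Vertices with at most $d-4$ form the paper's set $K$, and each gives at least $\frac49$.
\end{itemize}
Applying (RC7) to consecutive triples of $f$ gives $|K|\ge 2$, with the two vertices non-adjacent if $|K|=2$. (RC8) then makes them rich when $f$ has no $3$-vertex. With exactly one $3$-vertex, this $K$-structure is what reduces the deficient configurations to precisely the poor faces, where your (RC9)/(R3) step applies.
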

\begin{proof}
Let $f=[v_1v_2v_3v_4v_5]$ be a $5$-face of $G$. By Lemma~\ref{3+1}, a $3$-vertex has at most one $3$-neighbor, so $f$ is incident to at most three $3$-vertices. We consider three cases.

\textbf{Case 1:} $f$ is incident to at least two $3$-vertices. By symmetry, either $v_1,v_2$ are $3$-vertices or $v_1,v_3$ are $3$-vertices.

\textbf{Case 1.1:} $v_1,v_2$ are $3$-vertices. By Lemma~\ref{3+1}, each of $v_3,v_5$ is a $4^+$-vertex. Applying Lemma~\ref{334p} to the path $v_1v_2v_3$, $v_3$ has at most $d(v_3)-4$ pendent $3$-neighbors in $N(v_3)-v_2$. By Lemma~\ref{3-face}, $v_2$ cannot be a pendent $3$-neighbor of $v_3$, so $v_3$ has at most $d(v_3)-4$ pendent $3$-neighbors in $G$. If $d(v_4)=3$ or $v_4$ is a $4^+$-vertex with $d(v_4)-3$ pendent $3$-faces, then by Lemma~\ref{3+2} $v_3$ cannot be on a $3$-face when $d(v_3)=4$, so $v_3$ is either a $4$-vertex with no incident $3$-face or pendent $3$-face, or a $5^+$-vertex with at most $d(v_3)-4$ pendent $3$-faces; hence $v_3$ is rich. By the symmetry of $v_3$ and $v_5$, $v_5$ is also rich. By (R4), each of $v_3,v_5$ sends at least $\frac{1}{2}$ to $f$ and each of $v_1,v_2,v_4$ sends at least $\frac{1}{3}$ to $f$, so $\mu^*(f)\ge-2+\frac{1}{3}\times3+\frac{1}{2}\times2=0$. So we may assume that $v_4$ is a $4^+$-vertex with at most $d(v_4)-4$ pendent $3$-faces. Recall that each $v_i$ ($i\in\{3,5\}$) is also a $4^+$-vertex with at most $d(v_i)-4$ pendent $3$-faces. So each of $v_3,v_4,v_5$ is either a $4_3$-vertex or a rich $5^+$-vertex, which sends at least $\frac{4}{9}$ to $f$ by (R4). Hence $\mu^*(f)\ge-2+\frac{1}{3}\times2+\frac{4}{9}\times3=0$.

\textbf{Case 1.2:} $v_1,v_3$ are $3$-vertices. Then each of $v_2,v_4,v_5$ is a $4^+$-vertex, and by (R1) each of $v_1,v_3$ sends $\frac{1}{3}$ to $f$. Since $G$ contains no intersecting triangles, at most one of $v_1v_2,v_2v_3$ is on a $3$-face. We consider two subcases.

\textbf{Case 1.2.1:} Neither $v_1v_2$ nor $v_2v_3$ is on a $3$-face adjacent to $f$. Since $v_3$ is not a pendent $3$-neighbor of $v_4$, $v_4$ cannot be a $4_1$- or $5_1$-vertex by Lemma~\ref{3+2}. By symmetry, $v_5$ cannot be a $4_1$- or $5_1$-vertex. For each $i\in\{4,5\}$, by (R4) $v_i$ sends at least $\frac{5}{12}$ to $f$ if $d(v_i)=4$ and at least $\frac{7}{15}$ to $f$ if $d(v_i)\ge5$. By Lemma~\ref{3+2}, $v_2$ is not incident to a $3$-face when $d(v_2)=4$, so $v_2$ is a $4_2$-vertex or rich, which sends at least $\frac{5}{12}$ to $f$.

Suppose $d(v_4),d(v_5)\ge5$. Then by the above, $\mu^*(f)\ge-2+\frac{5}{12}+\frac{1}{3}\times2+\frac{7}{15}\times2=\frac{1}{60}>0$.

Suppose $d(v_4)=d(v_5)=4$. By Lemma~\ref{3+3}, $v_2$ has at most $d(v_2)-3$ pendent $3$-faces. If $v_2$ has at most $d(v_2)-4$ pendent $3$-faces, then $v_2$ is rich and sends at least $\frac{1}{2}$ to $f$ by (R4), so $\mu^*(f)\ge-2+\frac{1}{2}+\frac{1}{3}\times2+\frac{5}{12}\times2=0$.
If $v_2$ has $d(v_2)-3$ pendent $3$-faces, then $v_2$ sends at least $\frac{5}{12}$ to $f$ by (R4). By Lemma~\ref{3+1}, at least one of $v_1,v_3$ is a pendent $3$-neighbor of $v_2$, say $v_1$ by symmetry; then $v_5$ cannot be a $4_1$-vertex by Lemma~\ref{3+2}, so $v_5$ is a $4_3$-vertex and sends $\frac{4}{9}$ to $f$ by (R4). By Lemma~\ref{5-face}, $v_4$ cannot be incident to a $3$-face. Applying Lemma~\ref{3433p} to the path $v_1v_5v_4v_3$, $v_4$ has no pendent $3$-face, so $v_4$ is rich and sends $\frac{1}{2}$ to $f$ by (R4). Hence $\mu^*(f)\ge-2+\frac{5}{12}+\frac{1}{3}\times2+\frac{1}{2}+\frac{4}{9}=\frac{1}{36}>0$.

Suppose $d(v_4)=4$ and $d(v_5)\ge5$. If $v_4$ is incident to a $3$-face, then $v_5$ has at most $d(v_5)-4$ pendent $3$-faces by applying Lemma~\ref{3433p} to the path $v_3v_4v_5v_1$. If $v_4$ has a pendent $3$-face, then $v_5$ has at most $d(v_5)-4$ pendent $3$-faces by applying Lemma~\ref{334p} to the path $v_3v_4v_5$. Therefore $v_5$ has at most $d(v_5)-4$ pendent $3$-faces whenever $v_4$ is not rich, and by (R4) $v_5$ sends at least $\frac{1}{2}$ to $f$. If $v_4$ is rich, then $v_4$ sends $\frac{1}{2}$ to $f$ by (R4). In either case, $v_4,v_5$ send at least $\frac{5}{12}+\frac{1}{2}=\frac{11}{12}$ to $f$, so $\mu^*(f)\ge-2+\frac{5}{12}+\frac{1}{3}\times2+\frac{11}{12}=0$.

\textbf{Case 1.2.2:} Exactly one of $v_1v_2,v_2v_3$ is on a $3$-face adjacent to $f$, say $v_1v_2$ by symmetry. By Lemma~\ref{3+2}, $d(v_2)\ge5$ and $v_2$ has at most $d(v_2)-4$ pendent $3$-faces, so $v_2$ is rich and sends at least $\frac{1}{2}$ to $f$ by (R4). Thus $v_1,v_2,v_3$ send at least $\frac{1}{2}+\frac{1}{3}+\frac{1}{3}=\frac{7}{6}$ to $f$ in total, and it suffices to show that $v_4,v_5$ send at least $\frac{5}{6}$ to $f$.

Suppose $d(v_4)=d(v_5)=4$. Applying Lemma~\ref{334p} to the path $v_3v_4v_5$, $v_4$ has no pendent $3$-face, so $v_4$ is either a $4_3$-vertex or rich. If $v_4$ is a $4_3$-vertex, then $v_5$ cannot be on a $3$-face by Lemma~\ref{5-face}, so $v_5$ is a $4_2$-vertex and sends $\frac{5}{12}$ to $f$ by (R4). If $v_4$ is rich, then $v_4$ sends $\frac{1}{2}$ to $f$ and $v_5$ sends at least $\frac{1}{3}$ to $f$ by (R4). In either case, $v_4,v_5$ send at least $\min\{\frac{4}{9}+\frac{5}{12},\ \frac{1}{2}+\frac{1}{3}\}=\frac{5}{6}$ to $f$.

Suppose $d(v_5)=4$ and $d(v_4)\ge5$. Applying Lemma~\ref{334p} to the path $v_3v_4v_5$, $v_4$ has at most $d(v_4)-4$ pendent $3$-faces, so $v_4$ sends at least $\frac{1}{2}$ to $f$ by (R4). Hence $v_4,v_5$ send at least $\frac{1}{2}+\frac{1}{3}=\frac{5}{6}$ to $f$.

Suppose $d(v_4)\ge4$ and $d(v_5)\ge5$. Since $v_3$ is not a pendent $3$-neighbor of $v_4$, $v_4$ cannot be a $4_1$-vertex by Lemma~\ref{3+2}. So each of $v_4,v_5$ sends at least $\frac{5}{12}$ to $f$ by (R4), and $v_4,v_5$ send a total of $\frac{5}{12}+\frac{5}{12}=\frac{5}{6}$ to $f$.

For convenience, in the rest of the proof let $K=\{v_j:\ v_j \text{ is a } 4^+\text{-vertex with at most } d(v_j)-4 \text{ pendent } 3\text{-faces},\ 1\le j\le5\}$. If $v_1$ is a $3$-vertex on a $3$-face, say $v_1v_5$ is incident to a $3$-face by symmetry, then $v_5$ has at most $d(v_5)-4$ pendent $3$-faces by Lemma~\ref{3+2}, so $v_5\in K$. If $v_1$ is not on any $3$-face or $d(v_1)\ge4$, then applying Lemma~\ref{334p} to $v_5v_1v_2$, at least one of $v_2,v_5$ is in $K$, say $v_5$; and applying Lemma~\ref{334p} to $v_2v_3v_4$, at least one of $v_2,v_3,v_4$ is in $K$. So $|K|\ge2$; furthermore, if $|K|=2$, the two vertices in $K$ are not adjacent. If $|K|\ge3$, then $f$ gets at least $\frac{4}{9}$ from each vertex in $K$ and at least $\frac{1}{3}$ from each vertex in $V(f)-K$ by (R4), so $\mu^*(f)\ge-2+\frac{4}{9}\times3+\frac{1}{3}\times2=0$. Thus $|K|=2$. By Lemma~\ref{3+2}, each vertex $v_i$ in $V(f)-K$ has exactly $d(v_i)-3$ pendent $3$-faces.

\textbf{Case 2:} $f$ is incident to exactly one $3$-vertex, say $v_1$, so $f$ is a $(3,4^+,4^+,4^+,4^+)$-face. If $f$ is incident to at least two rich vertices, then by (R1) and (R4) $f$ gets at least $\frac{1}{2}$ from each incident rich vertex and at least $\frac{1}{3}$ from each other incident vertex, so $\mu^*(f)\ge-2+\frac{1}{2}\times2+\frac{1}{3}\times3=0$. So we may assume that $f$ is incident to at most one rich vertex.

\textbf{Case 2.1:} $f$ is incident to at least two $5^+$-vertices $v_i,v_j$ for some $i,j\in\{2,3,4,5\}$. If $v_i,v_j\in K$, then both $v_i$ and $v_j$ are rich, contrary to our assumption. If $v_i,v_j\notin K$, then by (R4) $f$ gets at least $\frac{5}{12}$ from each of $v_i,v_j$ and at least $\frac{4}{9}$ from each of at least two vertices in $K$, so $\mu^*(f)\ge-2+\frac{1}{3}+\frac{5}{12}\times2+\frac{4}{9}\times2=\frac{1}{18}>0$. If $v_i\in K$ and $v_j\notin K$, then by (R1) and (R4) $f$ gets at least $\frac{1}{2}$ from $v_i$, $\frac{5}{12}$ from $v_j$, $\frac{4}{9}$ from the vertex in $K-v_i$, and at least $\frac{1}{3}$ from each other incident vertex, so $\mu^*(f)\ge-2+\frac{1}{3}\times2+\frac{1}{2}+\frac{5}{12}+\frac{4}{9}=\frac{1}{36}>0$.

\textbf{Case 2.2:} $f$ is incident to exactly one $5^+$-vertex $v_i$ for some $i\in\{2,3,4,5\}$. If $v_i\in K$, then $v_i$ is rich, and by our assumption the $4$-vertex in $K-v_i$ is a $4_3$-vertex, which sends $\frac{4}{9}$ to $f$ by (R4). If $v_i$ has at most $d(v_i)-5$ pendent $3$-faces, then by (R1) and (R4) $v_i$ sends at least $\frac{5}{9}$ to $f$ and each vertex in $V(f)-K$ sends at least $\frac{1}{3}$ to $f$, so $\mu^*(f)\ge-2+\frac{1}{3}\times3+\frac{4}{9}+\frac{5}{9}=0$. So we may assume that $v_i$ has $d(v_i)-4$ pendent $3$-faces; then by (R4) $v_i$ sends at least $\frac{1}{2}$ to $f$. If some vertex $v_j\in\{v_2,v_3,v_4,v_5\}-K$ is a $4_2$-vertex, then $v_j$ sends $\frac{5}{12}$ to $f$ by (R4), so $\mu^*(f)\ge-2+\frac{1}{2}+\frac{4}{9}+\frac{5}{12}+\frac{1}{3}\times2=\frac{1}{36}>0$. Otherwise $f$ is a $5$-face with two $4_1$-vertices, one $4_3$-vertex, and one $5^+$-vertex $v_i$ with $d(v_i)-4$ pendent $3$-faces, that is, a poor $5$-face. By Lemma~\ref{special}, $f$ is weakly incident to a special $5^+$-vertex, so by (R1), (R3), and (R4) $f$ gets $\frac{1}{9}$ from its weakly incident special $5^+$-vertex, $\frac{1}{3}$ from each incident $4_1$-vertex or $3$-vertex, $\frac{4}{9}$ from each incident $4_3$-vertex, and at least $\frac{1}{2}$ from $v_i$. Hence $\mu^*(f)\ge-2+\frac{1}{3}\times3+\frac{4}{9}+\frac{1}{2}+\frac{1}{9}=\frac{1}{18}>0$.

If $v_i\notin K$, then each vertex in $K$ is a $4$-vertex and $v_i$ has $d(v_i)-3$ pendent $3$-faces. By our assumption, at most one vertex in $K$ is rich. If some vertex in $K$ is rich, then by (R1) and (R4) $f$ gets at least $\frac{5}{12}$ from $v_i$, $\frac{1}{2}+\frac{4}{9}$ from the two vertices in $K$, and at least $\frac{1}{3}$ from each other incident vertex, so
$\mu^*(f)\ge-2+\frac{5}{12}+\frac{1}{2}+\frac{4}{9}+\frac{1}{3}\times2=\frac{1}{36}>0$.
So we may assume that each vertex in $K$ is a $4_3$-vertex. Let $v_j$ be the vertex in $V(f)-(K\cup\{v_i,v_1\})$ for some $j\in\{2,3,4,5\}$. Then $v_j$ is either a $4_2$-vertex or a $4_1$-vertex. In the former case $v_j$ sends $\frac{5}{12}$ to $f$, so $\mu^*(f)\ge-2+\frac{5}{12}\times2+\frac{1}{3}+\frac{4}{9}\times2=\frac{1}{18}>0$.
In the latter case $f$ is a $5$-face with two $4_3$-vertices, one $4_1$-vertex, and one $5^+$-vertex $v_i$ with $d(v_i)-3$ pendent $3$-faces, that is, a poor $5$-face. By Lemma~\ref{special}, $f$ is weakly incident to a special $5^+$-vertex, so by (R1), (R3), and (R4) $f$ gets $\frac{1}{9}$ from its weakly incident special $5^+$-vertex, $\frac{1}{3}$ from each incident $4_1$-vertex or $3$-vertex, $\frac{4}{9}$ from each incident $4_3$-vertex, and at least $\frac{5}{12}$ from $v_i$. Hence $\mu^*(f)\ge-2+\frac{1}{3}\times2+\frac{5}{12}+\frac{4}{9}\times2+\frac{1}{9}=\frac{1}{12}>0$.

\textbf{Case 2.3:} $f$ is incident to no $5^+$-vertex, so $f$ is a $(3,4,4,4,4)$-face. Since $f$ is incident to at most one rich vertex and $|K|=2$, $f$ is incident to at least one $4_3$-vertex, say $v_i$ for some $i\in\{2,3,4,5\}$, and to two $4_1$- or $4_2$-vertices. If $f$ is incident to two $4_2$-vertices, then by (R1) and (R4) $f$ gets $\frac{5}{12}$ from each incident $4_2$-vertex, at least $\frac{4}{9}$ from each vertex in $K$, and $\frac{1}{3}$ from the incident $3$-vertex, so $\mu^*(f)\ge-2+\frac{5}{12}\times2+\frac{4}{9}\times2+\frac{1}{3}=\frac{1}{18}>0$. If $f$ is incident to two $4_1$-vertices, then $f$ is a $5$-face with two $4_1$-vertices, one $4_3$-vertex, and one $4$-vertex with no pendent $3$-face (say $v_j$ for some $j\in\{2,3,4,5\}$), that is, a poor $5$-face. By (R1), (R3), and (R4) $f$ gets $\frac{1}{9}$ from its weakly incident special $5^+$-vertex, $\frac{1}{3}$ from each incident $4_1$-vertex or $3$-vertex, $\frac{4}{9}$ from each incident $4_3$-vertex, and at least $\frac{4}{9}$ from $v_j$, so $\mu^*(f)\ge-2+\frac{1}{3}\times3+\frac{4}{9}\times2+\frac{1}{9}=0$. If $f$ is incident to one $4_1$-vertex and one $4_2$-vertex, then by (R1) and (R4) $f$ gets $\frac{1}{3}$ from each incident $4_1$-vertex or $3$-vertex, $\frac{5}{12}$ from each incident $4_2$-vertex, and $\frac{4}{9}$ from each incident $4_3$-vertex. Note that the vertex in $K-v_i$ is either a $4_3$-vertex or rich. In the former case $f$ is poor, and by Lemma~\ref{special} $f$ is weakly incident to a special $5^+$-vertex, so by (R3) $f$ gets $\frac{1}{9}$ from it. In the latter case, by (R4) $f$ gets at least $\frac{1}{2}$ from the incident rich vertex. Therefore, in either case $\mu^*(f)\ge-2+\frac{1}{3}\times2+\frac{5}{12}+\frac{4}{9}+\min\{\frac{4}{9}+\frac{1}{9},\ \frac{1}{2}\}=\frac{1}{36}>0$.

\textbf{Case 3:} $f$ is incident to no $3$-vertex. Recall that $|K|=2$ and the two vertices in $K$ are not adjacent, say $v_1,v_3$ by symmetry. Applying Lemma~\ref{3433p} to the path $v_2v_1v_5v_4$ or the path $v_2v_3v_4v_5$, neither $v_1$ nor $v_3$ is a $4_3$-vertex. So by (R1) and (R4) $f$ gets at least $\frac{1}{2}$ from each of $v_1,v_3$ and at least $\frac{1}{3}$ from each other incident vertex, so $\mu^*(f)\ge -2+\frac{1}{2}\times2+\frac{1}{3}\times3=0$.
\end{proof}

\end{document}